\numberwithin{equation}{section}
\newtheorem{lemm}{Lemma}[section]
\newtheorem{thrm}{Theorem}[section]
\newtheorem{rmk}{Remark}[section]
\newtheorem{cor}{Corollary}[section]
\begin{document}
\title[Bloch Band-based Gaussian beam superposition]{Error Estimates of the Bloch Band-Based Gaussian Beam Superposition for the Schr\"odinger Equation}
\author{Hailiang Liu and Maksym Pryporov  \\
\\
(Dedicated to our friend James Ralston)}
\address{Department of Mathematics, Iowa State University, Ames, Iowa 50010}
\email{hliu@iastate.edu; pryporov@iastate.edu}
\keywords{Schr\"{o}dinger equation, Bloch waves, Gaussian beams}
\subjclass{35A21, 35A35, 35Q40}
\maketitle
\begin{abstract}
This work is concerned with asymptotic approximations of the semi-classical Schr\"odinger equation in periodic media using Gaussian beams.  For the underlying equation, subject to a highly oscillatory initial data, a hybrid of the Gaussian beam approximation and homogenization leads to the Bloch eigenvalue problem and associated evolution equations for Gaussian beam components in each Bloch band.   We formulate a superposition of  Bloch-band based Gaussian beams to generate high frequency  approximate solutions to the original wave field.   For initial data of a sum of finite number of band eigen-functions, we prove that the first-order Gaussian beam superposition converges to the original wave field at a rate of $\epsilon^{1/2}$, with $\epsilon$ the semiclassically scaled constant,  as long as the initial data for Gaussian beam components in each band are prepared with same order of error or smaller.  For a natural choice of initial approximation, a rate of $\epsilon^{1/2}$ of initial  error is verified.
\end{abstract}


\section{Introduction}
We consider the semiclassically scaled Schr\"odinger equation with a periodic potential:
\begin{equation}\label{1.1}
i\varepsilon \partial_t \Psi=-\frac{\varepsilon^2}{2}\Delta\Psi+V\left(\frac{x}{\varepsilon}\right)\Psi+V_e(x)\Psi,\quad x\in{\mathbb{R}^d,}\quad t>0,
\end{equation}
subject to the two-scale initial condition:
\begin{equation}\label{1.2}
\Psi(0,x)=g\left(x,\frac{x}{\varepsilon}\right)e^{iS_0(x)/{\varepsilon}},\quad x\in{\mathbb{R}^d,}
\end{equation}
where $\Psi(t,x)$ is a complex wave function, $\varepsilon$ is the re-scaled Planck constant, $V_e(x)$-- smooth external potential, $S_0(x)$-- real-valued smooth function, $\displaystyle g(x,y)=g(x,y+2\pi)$-- smooth function, compactly supported in $x,$ i.e., $g(x,y)=0,\; x\not\in K_0,\; K_0$-- is a bounded set. $V(y)$ is periodic with respect to the crystal lattice $\Gamma=(2\pi\mathbb{Z})^d,$ it models the electronic potential generated by the lattice of atoms in the crystal \cite{DGR06}.

A typical application arises in solid state physics where (\ref{1.1}) describes the quantum dynamics of Bloch electrons moving in a crystalline lattice (generated by the ionic cores)  \cite{Sl49}.  The asymptotics of (\ref{1.1}) as $ \varepsilon \to 0+$   is a well-studied two-scale problem in the physics and mathematics literature \cite{Bu87, GMS91, Ho93, SN99, HST01, DGR02, PST03, AP05, DGR06}. On the other hand, the  computational challenge because of the small parameter $\varepsilon$ has prompted a search for asymptotic model based numerical  methods, see e.g.,  \cite{LW09, JWYH10}.


The main feature of this type of problems is the ``band structure" of solutions. For suitable initial data, the solution depends on the semi-classical Hamiltonian operator
\begin{equation}\label{1.3}
H(k,y)=\frac{1}{2}(-i\nabla_y+k)^2+V(y), \quad {y\in \Gamma},
\end{equation}
and the solution of the eigenvalue problem:
\begin{equation}\label{1.4}
\begin{cases} H(k,y)z(k,y)=E(k)z(k,y),\\
z(k,y)=z(k,y+2\pi),\\
\end{cases}
\end{equation}
where $k\in[-1/2,1/2]^d$-- called Brillouin zone, see \cite{Zi72}. The behavior of the eigen-pairs for general $k$ can be characterized by that for $k$ in this zone through a periodic extension.

According to the theory of Bloch waves \cite{Wi78}, the self-adjoint semi-bounded operator
$H(k,y)$ with a compact resolvent has a complete set of orthonormal eigenfunctions $z_n(k,y)$ in $L^2$, with $e^{iky}z_n(k, y)$  called Bloch functions. The correspondent eigenvalues $E_n(k)$ are called band functions. Standard perturbation theory  \cite{Ka80}  shows that $E_n(k)$ is a continuous function of $k$ and real analytic in a neighborhood of any $k$ such that
\begin{equation}\label{1.5}
 E_{n-1}(k)< E_n(k)<E_{n+1}(k).
\end{equation}
{The proof has been given first in \cite{LH71} and \cite{Cl64a, Cl64b} for $d=1$ and in \cite{Ne1} for $d=3$.} We assume that (\ref{1.5}) is satisfied, i.e.,  all band functions are strictly separated, $\forall n, \; k.$  Under this assumption one can choose $z_n(k,y)$ associated to $E_n(k)$ to be real analytic functions of $k$ \cite{DGR06}.  {This allows for a unique analytic extension of both $z_n(k, y)$ and $E_n(k)$ so that they can be evaluated for some complex $k$, say $k=\partial_ x \Phi$, where $\Phi$ is the Gaussian beam phase. }

A classical approach to solve this problem asymptotically is by the Bloch band decomposition based WKB method \cite{BLP78, GRT88, Sp96}, which leads to Hamilton-Jacobi and transport equations valid up to caustics.  The Bloch-band based level set method was introduced in \cite{LW09} to compute crossing rays and position density beyond caustics.  However, at caustics, neither method gives correct prediction for the amplitude.  A closely related alternative to the WKB method is the construction of approximations based on Gaussian beams. Gaussian beams are asymptotic solutions concentrated on classical trajectories for the Hamiltonian, and  they remain valid beyond ``caustics".   The existence of Gaussian beam solutions has been known since sometime in the 1960's, first in connection with lasers, see Babi\v{c} and Buldyrev \cite{VB56, BB72}. Later, they were used to obtain results on the propagation of singularities in solutions of PDEs \cite{LH71, JR82}.   The idea of using sums of Gaussian beams to represent more general high frequency solutions  was first introduced by Babi\v{c} and Pankratova in \cite{BP73} and was later proposed as a method for wave propagation by Popov in \cite{MP82}.  At present there is considerable interest in using superpositions of beams to resolve high frequency waves near caustics. This goes back to the geophysical applications in \cite{CPP:1982, Hill01}.  Recent work in this direction includes \cite{JR05,TQR07, JWY08, LQ08, Tan08, MR09, JWYH10}.

The accuracy  of the Gaussian beam superposition to approximate the original wave field is important, but determining the error of the Gaussian beam superposition 
is highly non-trivial, see the conclusion section of the review article by Babi\v{c} and Popov \cite{BP89}.  In the past few years, some significant progress on estimates of the error has been made.  One of the first results was obtained by Tanushev for the initial error in 2008 \cite{Tan08}.  Liu and Ralston \cite{LR09, LR10} gave rigorous convergence rates in terms of the small wave length for  both the acoustic wave equation in the scaled energy norm and the Schr\"{o}dinger equation in the $L^2$ norm. At about the same time, error estimates for phase space beam superposition were obtained by  Bougacha, Akian and Alexandre in \cite{BAA09} for the acoustic wave equation.   Building upon these advances,  Liu, Runborg and Tanushev further obtained sharp error estimates for a class of high-order, strictly hyperbolic  partial differential equations \cite{LRT11}.


Other methods that also yield an asymptotic description for time-scales of order $O(1)$ (i.e. beyond caustics) have been developed such as those based on Wigner measures \cite{GMMP97}.  The dynamics of the Wigner function corresponding to the Schr\"{o}dinger wave function can be semiclassically approximated to an error of order $O(\epsilon)$, see \cite{TP04} and references to previous works therein.  More recently, so-called space-adiabatic perturbation theory has been used to derive an effective Hamiltonian, governing the dynamics of particles in periodic potentials under the additional influence of slowly varying perturbations \cite{HST01, PST03}.  The semi-classical asymptotics of this effective model is then obtained in a second step, invoking an Egorov-type theorem. 
{Another analogous approach is the propagation of the so called semiclassical wave packets, developed by Hagedorn et al.  \cite{Ha80}. 
A recent rigorous analysis is given by Carles and Sparber \cite{CS12} in the context of  the Schr\"{o}dinger equation with periodic potentials. There the authors prove that using semiclassical wave packets within each Bloch band, an approximation result up to errors of order $O(\epsilon^{1/2})$ can be achieved, but for
times up to the Ehrenfest time-scale $T~\ln(\frac{1}{\varepsilon})$.  }

In this paper,  we develop a convergence theory for the Gaussian beam superposition as a valid approximate solution of problem  (\ref{1.1})-(\ref{1.2}).  { The novel contribution of the present work lies in the accuracy justification for an explicit construction -- the Gaussian beam superposition. Indeed,  Gaussian beam methods  are widely used in numerical simulations of high frequency waves fields.}

The Gaussian beam construction is based on Gaussian beams in each Bloch band, and carried out by using the two scale expansion approach, essentially following DiMassi et al. \cite{DGR06} for adiabatic perturbations. 
The accuracy study in \cite{DGR06} was  only on how well each Gaussian beam asymptotically satisfies the PDE.   In order to handle more general initial data in this paper,  we   
(i) present the approximation solution through beam superpositions over Bloch bands and initial points from which beams are issued;  and
(ii) estimate the error between the exact wave field and the asymptotic ones.   Numerical results using this type of superpositions were presented in \cite{JWYH10}.

Our focus in this work is mainly on  (ii).  {We use the notation:  $f\in C_b^m(\mathbb{R}^d)$ means that $f$ is $m$ times differentiable function,  and all derivatives up to $m$-th order included are bounded functions in $\mathbb{R}^d$. $f\in L_x^2$ means that f belongs to $L^2(\mathbb{R}^d)$ in $x$ variable}.  The main result can be stated as follows.
\begin{thrm}   Suppose that $S_0\in C^3_b(\mathbb{R}^d),  V_e\in C^{d+4}_b(\mathbb{R}^d)$, both $V(y)$ and $g(x, y)$ are periodic in $y$ with respect to the crystal lattice $\Gamma=(2\pi\mathbb{Z})^d$,  also {$V\in C^2(\Gamma)$} and $g(x, y)$  has compact support in $x$. We also assume  $g$ has the following expression
$$
\displaystyle g(x, y)=\sum_{n=1}^Na_n(x)z_n(\nabla_xS_0(x),y),
$$
where $z_n(k, y)$ are eigen-functions of (\ref{1.4}) with eigenvalues $E_n(k)$ satisfying (\ref{1.5}).  Let $\Psi(t, x)$ be the solution to (\ref{1.1})-(\ref{1.2}), and
$$
\displaystyle\Psi^\epsilon(t, x)=\tilde{\Psi}^\varepsilon\left(t,x,\frac{x}{\varepsilon}\right)
$$
be the Gaussian beam superposition defined by (\ref{3.24}) for $0<t\leq T$,  then
$$
\|\Psi-\Psi^\epsilon\|_{L^2_x} \leq C \varepsilon^{1/2},
$$
where $C$ may depend on $T$, $N$ and data given, but independent of $\varepsilon$.
\end{thrm}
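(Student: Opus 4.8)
The plan is to follow the now-standard well-posedness/consistency route for Gaussian beam error estimates, adapted to the Bloch-band two-scale setting. The exact solution operator for \eqref{1.1} is unitary on $L^2_x$ (the Hamiltonian $-\frac{\varepsilon^2}{2}\Delta + V(x/\varepsilon) + V_e(x)$ is self-adjoint), so by Duhamel's principle the error $\Psi - \Psi^\epsilon$ is controlled by two quantities: the initial error $\|\Psi(0,\cdot) - \Psi^\epsilon(0,\cdot)\|_{L^2_x}$, and the time-integrated \emph{residual} $\mathcal{R}^\epsilon := i\varepsilon\partial_t\Psi^\epsilon + \frac{\varepsilon^2}{2}\Delta\Psi^\epsilon - V(x/\varepsilon)\Psi^\epsilon - V_e(x)\Psi^\epsilon$, giving a bound of the form $\|\Psi-\Psi^\epsilon\|_{L^2_x} \le \|\Psi(0)-\Psi^\epsilon(0)\|_{L^2_x} + \varepsilon^{-1}\int_0^T \|\mathcal{R}^\epsilon(s)\|_{L^2_x}\, ds$. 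Since $g$ is assumed to be a finite sum $\sum_{n=1}^N a_n(x) z_n(\nabla_x S_0(x), y)$, the initial data decouples cleanly into $N$ band components, and by linearity it suffices to prove the estimate for a single band $n$; the constant then picks up the factor $N$. So the argument reduces to estimating the residual of a single-band Gaussian beam superposition and controlling its initial error.

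The key step is the residual estimate, and the point where the Bloch structure enters. Applying the operator to a single beam $z_n(\partial_x\Phi, x/\varepsilon)\,A(t,x,y)\,e^{i\Phi(t,x)/\varepsilon}$ and using the two-scale expansion from \cite{DGR06}, the leading terms cancel precisely because $z_n$ solves the eigenvalue problem \eqref{1.4} at $k = \partial_x\Phi$ (here using the analytic extension of $z_n$, $E_n$ to complex $k$ noted in the introduction), because $\Phi$ solves the Hamilton–Jacobi equation $\partial_t\Phi + E_n(\partial_x\Phi) + V_e(x) = 0$ along the beam, and because $A$ solves the associated transport equation. What survives is $O(\varepsilon)$ pointwise on the beam core but — and this is the essential Gaussian beam mechanism — it contains a factor that is the Taylor remainder of the phase/amplitude equations, which vanishes to appropriate order on the central ray. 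One then invokes the standard Gaussian beam estimates: because $\mathrm{Im}\,\Phi \gtrsim |x - x_0(t)|^2$ (positivity of the imaginary Hessian, propagated by the Riccati equation), integrating such a remainder against the Gaussian envelope over the superposition parameter gains a factor $\varepsilon^{d/4}$ per beam and, crucially, the non-smooth truncation of the beams (needed to keep $\mathrm{Im}\,\Phi$ controlled globally) together with the first-order Taylor remainder yields $\|\mathcal{R}^\epsilon\|_{L^2_x} \le C\varepsilon^{3/2}$. Dividing by $\varepsilon$ gives the $\varepsilon^{1/2}$ rate. Technical inputs here are the smoothness hypotheses ($V_e \in C^{d+4}_b$ gives enough derivatives for the transport equations and the non-squeezing of beam widths over $[0,T]$; $S_0 \in C^3_b$, $V \in C^2(\Gamma)$ for the eigenvalue branch), the compact support of $g$ (so the superposition integral is over a bounded set), and the uniform spectral gap \eqref{1.5} (so $z_n$, $E_n$ stay smooth and the band stays isolated — no inter-band leakage at leading order).

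For the initial error, $\Psi^\epsilon(0,x) = \tilde\Psi^\epsilon(0,x,x/\varepsilon)$ is the superposition of the $t=0$ beams, which by construction is the Gaussian-beam (FBI/wave-packet) reconstruction of $a_n(x) z_n(\nabla_x S_0(x), x/\varepsilon) e^{iS_0(x)/\varepsilon}$; the standard estimate for approximating a WKB-type profile by a superposition of first-order Gaussian beams gives $\|\Psi(0)-\Psi^\epsilon(0)\|_{L^2_x} \le C\varepsilon^{1/2}$ (this is the "natural choice of initial approximation" mentioned in the abstract, and is essentially Tanushev's initial-data estimate \cite{Tan08} carried through the fixed periodic factor $z_n(\nabla_x S_0(x),y)$, which is smooth in $x$ by the gap assumption and bounded). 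Combining: initial error $O(\varepsilon^{1/2})$ plus $\varepsilon^{-1}\cdot T \cdot O(\varepsilon^{3/2}) = O(\varepsilon^{1/2})$ yields the theorem.

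The main obstacle I anticipate is the residual estimate in the two-scale setting — specifically, handling the interaction between the $\varepsilon$-periodic fast oscillation carried by $z_n(\cdot, x/\varepsilon)$ and the beam concentration scale $\sqrt{\varepsilon}$, and showing that differentiating $z_n(\partial_x\Phi(t,x), x/\varepsilon)$ in $x$ (which hits both the slow argument $\partial_x\Phi$ and the fast argument $x/\varepsilon$) produces only terms that either cancel against the transport equation or are genuinely $O(\varepsilon)$ relative to the beam. Getting the bookkeeping of these mixed terms right, while tracking that the Taylor-remainder structure needed for the Gaussian cancellation survives the presence of the extra $z_n$ factor, is the delicate part; the remaining pieces (unitarity, Duhamel, the $\varepsilon^{d/4}$ per-beam Gaussian integral bounds, summation over the $N$ bands) are routine once that is in place.
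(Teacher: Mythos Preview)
Your overall strategy (unitary evolution, Duhamel, initial error $O(\varepsilon^{1/2})$, residual $O(\varepsilon^{3/2})$) matches the paper's. The one substantive organizational difference is that the paper does \emph{not} work directly in $L^2_x$: it lifts the entire argument to the two-scale space $L^2_{x,y}$, proving well-posedness (Lemma~2.1), the initial error (Theorem~2.1) and the residual bound (Theorem~2.2) all in the $L^2_{x,y}$ norm, and only at the very end descends to $L^2_x$ via a Riemann-sum/homogenization lemma (Lemma~2.2), namely $\|f(x,x/\varepsilon)\|_{L^2_x}\lesssim \|f(x,y)\|_{L^2_{x,y}}$ for $y$-periodic $f$. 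Your plan to invoke unitarity of (1.1) on $L^2_x$ is equally valid for the Duhamel step, but you then need to bound $\|\mathcal R^\epsilon(t,\cdot)\|_{L^2_x}$ and the initial error in $L^2_x$, and the natural estimates for both live in $L^2_{x,y}$ (that is where the $y$-periodicity of $z_n$ and the band orthogonality are usable); so you will need this conversion lemma anyway, applied to the residual and to the initial discrepancy rather than to the final error. This is precisely the device that dissolves the ``main obstacle'' you flag in your last paragraph: once $y$ is treated as an independent periodic variable, the fast factor $z_n(\partial_x\Phi,y)$ is a smooth bounded multiplier in $L^2_y$, the mixed $x$--$x/\varepsilon$ differentiation issue disappears, and the residual estimate becomes a standard Gaussian-beam superposition bound in $x$ (handled in the paper via the non-squeezing phase estimate and non-stationary phase argument of \cite{LRT11} for the caustic region). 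So nothing in your proposal is wrong, but the missing ingredient is Lemma~2.2 and the decision to run the whole estimate in $L^2_{x,y}$ first.
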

\begin{rmk}
{The regularity requirement of $V$ is sufficient for validating the Gaussian beam approximation, but excludes the Coulomb-like singularity which is typical of the mean field electrostatic potential in real solids. It would be interesting to investigate how such an assumption could be relaxed. }
\end{rmk}
 We prove this result in several steps. We first reformulate the problem using the two scale expansion method \cite{BLP78, DGR06}, in which both $x$ and $y=\frac{x}{\varepsilon}$ are regarded as two independent variables. The well-posedness estimate for this reformulated problem tells that the total error is bounded by the sum of initial and evolution error.  {For initial error, we use some techniques similar to those developed by Tanushev \cite{Tan08}, except that here we have to deal with the band structure. The band structure induces additional technical difficulties, which we solve in several steps. As for evolution error part, we rely on the non-squeezing argument proved in \cite{LRT11}, which is the key technique for the proof. After we obtain estimate in $L^2_{x,y}$ we convert to $L^2_x.$
 }

This paper has the following structure: in section 2 we use the two scale method to reformulate our problem and state the corresponding results; in the end of this section we prove Theorem $1.1$ for the original problem.   In section 3 we review Gaussian beam constructions and formulate our Gaussian beam superposition. Justifications of  main results are presented in section 4 and section 5. In section 6 we discuss possible extensions of our results and some remaining challenges.

\section{Set-up and Main Results}

In order to construct an asymptotic solution of (\ref{1.1}) we use the two-scale method as in \cite{BLP78, DGR06}. We regard $x$ and $\displaystyle y=\frac{x}{\varepsilon}$ as independent variables and introduce a new function $$\tilde{\Psi}(t,x,y)\equiv\Psi(t,x),$$ equation (\ref{1.1}) can be rewritten in the form:
\begin{equation}\label{2.1}\displaystyle\begin{cases}
i\varepsilon \partial_t \tilde{\Psi}=-\frac{1}{2}(\varepsilon\nabla_x+\nabla_y)^2\tilde{\Psi}+V(y)\tilde{\Psi}+V_e(x)\tilde{\Psi},\\
\tilde{\Psi}(0,x,y)=g(x,y)e^{iS_0(x)/\varepsilon},\quad x\in{\mathbb{R}^d,}\quad y\in[0,2\pi]^d.
\end{cases}\end{equation}
We assume that the initial amplitude $g(x,y)$ can be decomposed into $N$ bands,
\begin{equation}\label{gg}
g(x,y)=\sum_{n=1}^Na_n(x)z_n(\nabla_x S_0,y),
\end{equation}
where $a_n$ is determined by
\begin{equation}\label{2.3}
a_n(x)=\int_{[0,2\pi]^d}g(x,y)\overline{z_n(\nabla_x S_0,y)}dy,
\end{equation}
 and $\displaystyle\big\{z_n(\partial_xS_0,y)\big\}_{n=1}^\infty$ are eigenfunctions of the self-adjoint second order differential operator $H(k,y)$ defined by (\ref{1.3}). $\displaystyle\big\{z_n(\partial_xS_0,y)\big\}_{n=1}^\infty$ form an orthonormal basis in $L^2(0,2\pi).$

For each energy band, the Gaussian beam ansatz was constructed in \cite{DGR06}, which we will review in section $3:$
\begin{equation}\label{2.4}
\tilde{\Psi}_{GB}^n(t,x,y;x_0)={A}^n(t,x,y;x_0)e^{i\Phi_n(t,x;x_0)/\varepsilon},
\end{equation}
where $\Phi_n$ and $ {A}^n$ are Gaussian beam phases and amplitudes,  respectively, $n=1,\dots N$. The Gaussian beam phase is defined as:
\begin{equation}
\Phi_n(t,x;x_0)=S_n(t;x_0)+p_n(t;x_0)(x-\tilde{x}_n(t;x_0))+\frac{1}{2}(x-\tilde{x}_n(t;x_0))^\top M_n(t;x_0)(x-\tilde{x}_n(t;x_0)),
\end{equation}
where $\tilde{x}_n, p_n, S_n$ and $M_n,$ as well as the amplitude $a_n$ satisfy corresponding evolution equations  (see section 3 for details).
Using the fact that the Schr\"odinger equation is linear, we sum the Gaussian beam ansatz for each band to obtain the approximate solution along the ray:
\begin{equation}
\tilde{\Psi}_{GB}(t,x,y;x_0)=\sum_{n=1}^N\tilde{\Psi}_{GB}^n(t,x,y;x_0).
\end{equation}
Using $\tilde{\Psi}_{GB}(t,x,y;x_0)$ as a building block of the approximate solution,  we have the following superposition of Gaussian beams:
\begin{equation}
\tilde{\Psi}^{\varepsilon}(t,x,y)=\frac{1}{(2\pi\varepsilon)^{\frac{d}{2}}}\int_{K_0}\tilde{\Psi}_{GB}(t,x,y;x_0)dx_0,
\end{equation}
where $\displaystyle\frac{1}{(2\pi\varepsilon)^{\frac{d}{2}}}$ is a normalizing constant which is needed for matching the initial data of problem (\ref{2.1}).   The initial data is approximated by:
\begin{equation}\label{2.8}
\tilde{\Psi}^{\varepsilon}(0,x,y)=\frac{1}{(2\pi\varepsilon)^{\frac{d}{2}}}\int_{K_0}\sum_{n=1}^NA^n(0,x,y;x_0)e^{i\Phi^0(x;x_0)/\varepsilon}dx_0,
\end{equation}
where ${A}^n(0,x,y;x_0)$ is the initial data for the amplitude, and  $\Phi^0$ is the initial Gaussian beam phase for all bands,  chosen as follows:
\begin{equation}\label{2.9}
\Phi^0(x;x_0)=S_0(x_0)+\nabla_xS_0(x_0)\cdot(x-x_0)+\frac{1}{2}(x-x_0)^\top\cdot(\nabla_x^2 S_0(x_0)+iI)(x-x_0).
\end{equation}
We address the two-scale problem, with $\displaystyle y=\frac{x}{\varepsilon}$ considered to be independent variables, and then convert to the original problem.
The norm $L^2_{x,y}$ is defined as follows:
\begin{equation}
\|u\|^2_{L^2_{x,y}}=\int_{[0,2\pi]^d}\int_{\mathbb{R}^d}|u(x,y)|^2dxdy.
\end{equation}
 We obtain two major results formulated in the following theorems:
\begin{thrm}\label{th2.1}[Initial error estimate]
Let $K_0\subset\mathbb{R}^d$ be a bounded measurable set, $g(x,y)\in H^1(K_0\times[0,2\pi]^d),\; S_0(x)\!\in C_b^3(\mathbb{R}^d)$. Then the initial error made by the Gaussian beam
superposition (\ref{2.8}) is as follows:
$$\|\tilde{\Psi}(0, x, y)-\tilde{\Psi}^\varepsilon(0, x, y)\|_{L^2_{x,y}}\leq C\varepsilon^{1/2},$$
where constant $C$ depends only on the initial amplitude $g(x,y)$ and the initial phase $S_0(x).$
\end{thrm}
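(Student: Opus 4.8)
The plan is to reduce the $L^2_{x,y}$ difference between the true initial data $g(x,y)e^{iS_0(x)/\varepsilon}$ and the superposition integral (\ref{2.8}) to a sum of $N$ band-by-band contributions, and then estimate each contribution by a stationary-phase / Gaussian-integral argument of the type developed by Tanushev \cite{Tan08}, adapted to carry the Bloch eigenfunction $z_n$ along as an extra factor. First I would use the band decomposition (\ref{gg}) of $g$ together with the orthonormality of $\{z_n(\nabla_x S_0(x),y)\}$ in $L^2_y$ to write $\|\tilde\Psi(0)-\tilde\Psi^\varepsilon(0)\|_{L^2_{x,y}} \le \sum_{n=1}^N \|a_n(x)z_n(\nabla_xS_0(x),y)e^{iS_0(x)/\varepsilon} - \mathcal{I}_n\|_{L^2_{x,y}}$, where $\mathcal{I}_n$ denotes the $n$-th summand of (\ref{2.8}). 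It suffices to bound each term by $C_n\varepsilon^{1/2}$; the triangle inequality then finishes the proof with $C=\sum C_n$.

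Next I would fix $n$ and analyze a single band term. The initial amplitude $A^n(0,x,y;x_0)$ is, by construction in section 3, essentially $a_n(x_0)z_n(\nabla_xS_0(x_0),y)$ (the leading term), and the phase $\Phi^0(x;x_0)$ in (\ref{2.9}) is the second-order Taylor expansion of $S_0$ about $x_0$ with an added $\tfrac{i}{2}|x-x_0|^2$ confining term. The key identity to exploit is the Gaussian-superposition reproducing formula: for a smooth symbol $b$ one has $\frac{1}{(2\pi\varepsilon)^{d/2}}\int b(x_0)e^{i[\nabla S_0(x_0)\cdot(x-x_0)+\frac12(x-x_0)^\top(\nabla^2S_0(x_0)+iI)(x-x_0)]/\varepsilon}\,dx_0 \approx b(x)e^{iS_0(x)/\varepsilon}$ up to $O(\varepsilon^{1/2})$ in $L^2_x$, because the $x_0$-integral concentrates in an $O(\sqrt\varepsilon)$ neighborhood of $x_0=x$ where, thanks to $\Phi^0(x;x)=S_0(x)$ and $\nabla_{x_0}\Phi^0|_{x_0=x}=0$, the phase matches $S_0(x)/\varepsilon$ to the needed order. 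The non-trivial twist is that the symbol $b$ is now the $y$-dependent vector $a_n(x_0)z_n(\nabla_xS_0(x_0),y)$, so I must track how the $O(\sqrt\varepsilon)$ shift in $x_0$ propagates through $z_n(\nabla_xS_0(x_0),y)$; this is where the hypotheses $S_0\in C^3_b$ and $g\in H^1(K_0\times[0,2\pi]^d)$, together with the analyticity and uniform smoothness of $k\mapsto z_n(k,\cdot)$ (guaranteed by the band-separation assumption (\ref{1.5})), are used to control $\|z_n(\nabla S_0(x_0),\cdot)-z_n(\nabla S_0(x),\cdot)\|$ by $C|x_0-x|$.

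Concretely, I would split each band error into: (a) a \emph{phase-matching} error, replacing $e^{i\Phi^0(x;x_0)/\varepsilon}$ inside the $x_0$-integral by $e^{i[S_0(x_0)+\nabla S_0(x_0)\cdot(x-x_0)]/\varepsilon}e^{-|x-x_0|^2/(2\varepsilon)}$ and then by the exact value at $x_0=x$; (b) an \emph{amplitude-variation} error from freezing $a_n(x_0)z_n(\nabla S_0(x_0),y)$ at $x_0=x$; and (c) the \emph{leading Gaussian-integral} term, which by the identity $\frac{1}{(2\pi\varepsilon)^{d/2}}\int_{\mathbb{R}^d}e^{-|x-x_0|^2/(2\varepsilon)}e^{i\cdots}dx_0$ reproduces the target exactly up to boundary effects from $K_0$ (the support of $g$ lies in $K_0$, so tails decay like $e^{-c/\varepsilon}$ away from $\partial K_0$ and contribute negligibly). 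For (a) and (b), Taylor's theorem gives an integrand bounded by $C|x-x_0|^2/\varepsilon$ times a Gaussian in $(x-x_0)/\sqrt\varepsilon$; integrating in $x_0$ and then taking the $L^2_x$ norm produces the $\varepsilon^{1/2}$ rate — the classical Tanushev scaling, where one power of $\sqrt\varepsilon$ comes from the quadratic vanishing of the phase/amplitude mismatch and is not fully cancelled by the $\varepsilon^{-d/2}$ normalization against the $\varepsilon^{d/2}$ Gaussian mass.

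The main obstacle I anticipate is step (b) combined with the $y$-dependence: one must show that the map $x_0\mapsto z_n(\nabla_xS_0(x_0),\cdot)\in L^2([0,2\pi]^d)$ is Lipschitz with a constant uniform over the compact set $K_0$, and that this survives being paired with the only $H^1$-regular amplitude $a_n$ (rather than $C^\infty$). This requires invoking the real-analytic dependence of the Bloch eigenpair on $k$ under the gap condition (\ref{1.5}) to get a uniform bound on $\partial_k z_n$, then using $S_0\in C^3_b$ so $\nabla_x S_0\in C^2_b$ has bounded derivative, and finally handling the $H^1$ (not $C^1$) regularity of $a_n$ by a density/approximation argument or by keeping $a_n$ under the integral and using Cauchy–Schwarz in $x_0$ so that only $\nabla a_n\in L^2$ is needed. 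Once the uniform Lipschitz-in-$x_0$ bound on the full symbol is in hand, the remaining estimates are the standard Gaussian-beam initial-data computations and the rate $\varepsilon^{1/2}$ follows.
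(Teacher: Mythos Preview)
Your approach is sound and will deliver the $\varepsilon^{1/2}$ rate, but the paper organizes the argument differently and slightly more economically. Rather than decomposing band-by-band from the outset, the paper inserts a single intermediate function
\[
\Psi^*=\frac{1}{(2\pi\varepsilon)^{d/2}}\int_{\mathbb{R}^d} g(x_0,y)\,e^{iT_2^{x_0}[S_0](x)/\varepsilon}\,e^{-|x-x_0|^2/(2\varepsilon)}\,dx_0
\]
built from the \emph{full} amplitude $g(x_0,y)$, and splits via the triangle inequality into $\|\tilde\Psi_0-\Psi^*\|+\|\Psi^*-\tilde\Psi_0^\varepsilon\|$. The first piece (Lemma~\ref{lem4.1}) is then pure Tanushev---your steps (a), (b), (c)---using only $g\in H^1$ and $S_0\in C^3_b$, with the band structure nowhere in sight; the $\partial_k z_n$ bound appears only in the second piece (Lemma~\ref{4.2}). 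Your route trades this single clean estimate for doing the Tanushev computation $N$ times, once per band, which works but forces you to differentiate the composite $x_0\mapsto a_n(x_0)z_n(\nabla S_0(x_0),y)$ rather than $g$ itself.

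One slip to correct: the initial amplitude in (\ref{2.8}) is not $a_n(x_0)z_n(\nabla_xS_0(x_0),y)$ but $a_n(x_0)\,z_n(\partial_x\Phi^0(x;x_0),y)+\varepsilon A_1^n$, with $\partial_x\Phi^0=\nabla S_0(x_0)+(\nabla^2S_0(x_0)+iI)(x-x_0)$ a \emph{complex}, $x$-dependent argument. Your step~(b) as written only accounts for the real variation $z_n(\nabla S_0(x_0),\cdot)\to z_n(\nabla S_0(x),\cdot)$; you also need the shift $z_n(\partial_x\Phi^0,\cdot)\to z_n(\nabla S_0(x_0),\cdot)$, which is precisely the content of the paper's Lemma~\ref{4.2} and is where the analytic extension of $z_n$ to complex $k$ (remarked on after (\ref{1.5})) is actually used. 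Since $|\partial_x\Phi^0-\nabla S_0(x_0)|\le C|x-x_0|$, the same Lipschitz-in-$k$ bound you already invoke handles it, so once you add this term the rest of your outline goes through unchanged.
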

The proof is split in two parts, see Lemma \ref{4.1} and Lemma \ref{4.2}.

In order to measure the evolution error, we define $P$ the two-scale Schr\"odinger operator,
\begin{equation} \label{2.11}
P(\tilde{\Psi})=i\varepsilon\partial_t\tilde{\Psi}+\frac{1}{2}(\varepsilon\nabla_x+\nabla_y)^2\tilde{\Psi}-V(y)\tilde{\Psi}-V_e(x)\tilde{\Psi}.
\end{equation}

 \begin{thrm} \label{Th2.2}[Evolution error estimate]
Let $K_0$ be a bounded set, condition (\ref{1.5}) is satisfied, the external potential $V_e(x)\in C^{d+4}_b(\mathbb{R}^d)$.
Then the evolution error is
$${\rm sup}_{0\leq t\leq T} \|P(\tilde{\Psi}^\varepsilon(t,\cdot))\|_{L^2_{x,y}}\leq C\varepsilon^{3/2},$$
where constant $C$ depends on the measure of set $K_0,$ finite time $T$, the number of bands $N$, 
and external potential $V_e.$
\end{thrm}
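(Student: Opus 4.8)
\medskip

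\noindent\emph{Plan of proof of Theorem~\ref{Th2.2}.} The strategy is to split the estimate into a pointwise residual bound for a single Bloch--band beam and a superposition estimate that integrates the beam data over the launch points $x_0\in K_0$ while preserving the gain in $\varepsilon$. Since the operator $P$ in (\ref{2.11}) is local and linear and $K_0$ is bounded,
\[
P\big(\tilde{\Psi}^\varepsilon(t,\cdot)\big)=\frac{1}{(2\pi\varepsilon)^{d/2}}\int_{K_0}\sum_{n=1}^{N}P\big(\tilde{\Psi}_{GB}^{n}(t,\cdot\,;x_0)\big)\,dx_0 ,
\]
with $\tilde{\Psi}_{GB}^{n}=A^{n}e^{i\Phi_n/\varepsilon}$ as in (\ref{2.4}), so it suffices to treat one band and one $x_0$ and then sum over $n$. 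As a preliminary one records uniform bounds on $[0,T]\times K_0$ for all beam ingredients: the rays $(\tilde x_n,p_n)$ and the action $S_n$ solve the Hamiltonian ODEs of $\mathcal{H}_n(x,k)=E_n(k)+V_e(x)$ and are defined on all of $[0,T]$ thanks to the boundedness of $\nabla V_e$ and the regularity of $E_n$; the Hessian $M_n$ solves the matrix Riccati equation with $M_n(0;x_0)=\nabla^2S_0(x_0)+iI$, and this flow keeps $\operatorname{Im}M_n(t;x_0)\geq c_T I>0$, so that $\operatorname{Im}\Phi_n(t,x;x_0)\geq\beta|x-\tilde x_n(t;x_0)|^2$ and $|e^{i\Phi_n/\varepsilon}|\leq e^{-\beta|x-\tilde x_n(t;x_0)|^2/\varepsilon}$ on $[0,T]$ for some $\beta=\beta(T)>0$.

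For the single-beam residual one substitutes $\tilde{\Psi}_{GB}^{n}=A^{n}e^{i\Phi_n/\varepsilon}$ into (\ref{2.11}), expands $(\varepsilon\nabla_x+\nabla_y)^2=\Delta_y+2\varepsilon\,\nabla_x\!\cdot\!\nabla_y+\varepsilon^2\Delta_x$, and collects powers of $\varepsilon$. Using that $z_n(k,y)$ solves the Bloch eigenvalue problem (\ref{1.4}), analytically continued to the complex momentum $k=\nabla_x\Phi_n$ (admissible under the gap condition (\ref{1.5})), the $\varepsilon^{0}$ part collapses to the scalar eikonal residual $\big(-\partial_t\Phi_n-E_n(\nabla_x\Phi_n)-V_e(x)\big)A^{n}$, which by the ray and Riccati equations vanishes to second order at $x=\tilde x_n$ and is therefore $O(|x-\tilde x_n|^3)$; the $\varepsilon^{1}$ part, by the amplitude transport equation, vanishes on the ray and is $O(\varepsilon|x-\tilde x_n|)$; the $\varepsilon^{2}$ part $\tfrac12\varepsilon^{2}\Delta_xA^{n}$ is $O(\varepsilon^{2})$. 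All coefficients are bounded and smooth in $y$, so together with the Gaussian factor this gives, uniformly in $t\in[0,T]$, $x_0\in K_0$ and $y\in[0,2\pi]^d$,
\[
\big|P\big(\tilde{\Psi}_{GB}^{n}(t,x,y;x_0)\big)\big|\le C\big(|x-\tilde x_n(t;x_0)|^{3}+\varepsilon|x-\tilde x_n(t;x_0)|+\varepsilon^{2}\big)\,e^{-\beta|x-\tilde x_n(t;x_0)|^{2}/\varepsilon}.
\]
This is in substance the single-beam estimate of \cite{DGR06}, refined so as to record the order of vanishing of the residual on the central ray.

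It remains to superpose over $x_0$. Since the $y$-domain is the compact torus and the integrands are bounded and smooth in $y$, $\|\cdot\|_{L^2_{x,y}}$ is controlled by a fixed multiple of the $L^2_x$-norm of the $y$-supremum, so everything reduces to bounding, for $m\in\{0,1,3\}$ and bounded $B_n$,
\[
\Big\|\frac{1}{(2\pi\varepsilon)^{d/2}}\int_{K_0}|x-\tilde x_n(t;x_0)|^{m}\,B_n(t,x,y;x_0)\,e^{i\Phi_n(t,x;x_0)/\varepsilon}\,dx_0\Big\|_{L^2_x}.
\]
A crude estimate, bounding the $L^2_x$-norm of the $x_0$-integral by the $x_0$-integral of the $L^2_x$-norms, only gives $C\varepsilon^{m/2-d/4}$, which is weaker than $\varepsilon^{3/2}$ — already the $m=3$ term falls short, in every dimension — and the change of variables $x_0\mapsto\tilde x_n(t;x_0)$ is not available because the rays may focus into caustics. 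This is exactly where the non-squeezing superposition estimate of \cite{LRT11} enters: exploiting the symplectic structure of the ray flow, which prevents the beams from concentrating even at caustics, it bounds the displayed quantity by $C\varepsilon^{m/2}$ uniformly on $[0,T]$, the constant depending on $|K_0|$, $T$ and on $C^{d+4}_b$-norms of the data. The number of $x_0$-derivatives of the beam quantities, and hence of $V_e$, required by that estimate grows with $d$, which accounts for the hypothesis $V_e\in C^{d+4}_b$. Applying it with $m=3$ (coefficient $O(1)$), $m=1$ (coefficient $O(\varepsilon)$) and $m=0$ (coefficient $O(\varepsilon^{2})$) produces contributions of order $\varepsilon^{3/2}$, $\varepsilon^{3/2}$ and $\varepsilon^{2}$; summing over the $N$ bands and absorbing the $y$-volume into $C$ gives $\sup_{0\le t\le T}\|P(\tilde{\Psi}^\varepsilon(t,\cdot))\|_{L^2_{x,y}}\le C\varepsilon^{3/2}$.

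The main obstacle is twofold. First, in the two-scale \emph{band} setting one must carefully justify the analytic continuation of $E_n$ and $z_n$ to the complex momenta $k=\nabla_x\Phi_n$ and verify that the $\varepsilon^{0}$ and $\varepsilon^{1}$ residuals genuinely vanish to the stated orders on the central ray — there is no counterpart of this in the scalar Schr\"odinger analysis, and it is the source of the regularity hypotheses on $V$. Second, and more seriously, one must adapt the non-squeezing estimate of \cite{LRT11}, established there for scalar strictly hyperbolic systems, to the operator (\ref{2.11}), checking that neither the $y$-integration nor the band index $n$ disturbs the underlying symplectic argument; I expect this adaptation to be the real crux of the proof.
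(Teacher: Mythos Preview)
Your plan is correct and follows essentially the same route as the paper: decompose by band, record the orders of vanishing of the residuals $c_0,c_1,c_2$ on the central ray, and then recover the superposition gain via the non-squeezing machinery of \cite{LRT11}; the paper carries out exactly the adaptation you flag as the crux by squaring the $L^2$-norm into a double $(x_0,x_0')$ integral, splitting $K_0\times K_0$ into caustic and non-caustic regions $D_1,D_2$, and applying the phase lower bound plus a non-stationary phase lemma on $D_2$. One small refinement: the $\varepsilon^{1}$ residual vanishes on the ray not by the amplitude transport equation alone but because the beam amplitude carries the corrector $\varepsilon A_1^n$ of (\ref{A_1}), which is what removes the component of $LA_0$ orthogonal to $z_n$---without it $c_1$ would be merely $O(\varepsilon)$ on the ray and your $m=1$ bookkeeping would fail.
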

The proof of this theorem is done in several steps,  one step requires  a phase estimate which uses essentially the ``Non-squeezing" result obtained by Liu et al. \cite{LRT11}.

Finally  we recall the well-posedness estimate for the two-scale Schr\"odinger equation  (\ref{2.1}).
\begin{lemm}
The $L^2$--norm of the difference between the exact solution $\tilde{\Psi}$ and an approximate solution $\tilde{\Psi}^\varepsilon$ of the problem (\ref{2.1}) is bounded above by the following estimate:
\begin{equation}
\|\tilde{\Psi}(t, x, y)-\tilde{\Psi}^\varepsilon(t, x, y)\|_{L^2_{x,y}}\leq\|\tilde{\Psi}(0, x, y)-\tilde{\Psi}^\varepsilon(0, x, y)\|_{L^2_{x,y}}+\frac{1}{\varepsilon}\int_0^T\|P(\tilde{\Psi}^\varepsilon)\|_{L^2_{x,y}}dt,\quad 0<t\leq T,
\end{equation}
where $T$ is a finite time,  $\tilde{\Psi}(0,\cdot), \tilde{\Psi}^\varepsilon(0,\cdot)$ are initial values of the exact and approximate solution respectively.
\end{lemm}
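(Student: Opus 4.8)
The plan is to treat this as a standard $L^2$ energy (well-posedness) estimate for the error. Set $w := \tilde{\Psi} - \tilde{\Psi}^\varepsilon$. Since the exact solution satisfies (\ref{2.1}) we have $P(\tilde{\Psi}) = 0$, so by linearity of the operator $P$ in (\ref{2.11}),
\[
P(w) = -P(\tilde{\Psi}^\varepsilon), \qquad\text{i.e.}\qquad i\varepsilon\,\partial_t w = H_\varepsilon w - P(\tilde{\Psi}^\varepsilon),
\]
where $H_\varepsilon := -\tfrac12(\varepsilon\nabla_x+\nabla_y)^2 + V(y) + V_e(x)$ and $w(0) = \tilde{\Psi}(0,\cdot) - \tilde{\Psi}^\varepsilon(0,\cdot)$. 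The operator $H_\varepsilon$ is self-adjoint on $L^2_{x,y} = L^2(\mathbb{R}^d_x\times[0,2\pi]^d_y)$: its principal part is a constant-coefficient elliptic operator with periodic boundary conditions in $y$, and $V\in C^2(\Gamma)$, $V_e\in C^{d+4}_b(\mathbb{R}^d)$ are bounded perturbations. In particular $\langle H_\varepsilon u, u\rangle_{L^2_{x,y}}\in\mathbb{R}$, the relevant integrations by parts in $x$ and $y$ produce no boundary terms (periodicity in $y$; sufficient decay/integrability in $x$), and $\langle(\varepsilon\nabla_x+\nabla_y)^2 w, w\rangle = -\|(\varepsilon\nabla_x+\nabla_y)w\|^2$ is real.

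Next I would differentiate $\|w(t)\|_{L^2_{x,y}}^2$ in time. Pairing the evolution equation for $w$ with $w$ in $L^2_{x,y}$ and taking real parts, the self-adjoint term drops out because $\mathrm{Re}\,\tfrac{1}{i\varepsilon}\langle H_\varepsilon w, w\rangle = \tfrac1\varepsilon\,\mathrm{Im}\,\langle H_\varepsilon w, w\rangle = 0$, leaving
\[
\frac{d}{dt}\|w(t)\|_{L^2_{x,y}}^2 = -\frac{2}{\varepsilon}\,\mathrm{Im}\,\langle P(\tilde{\Psi}^\varepsilon)(t), w(t)\rangle_{L^2_{x,y}} \;\le\; \frac{2}{\varepsilon}\,\|P(\tilde{\Psi}^\varepsilon)(t)\|_{L^2_{x,y}}\,\|w(t)\|_{L^2_{x,y}}
\]
by Cauchy--Schwarz. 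Dividing by $\|w(t)\|_{L^2_{x,y}}$ — after replacing it by $(\|w(t)\|_{L^2_{x,y}}^2+\delta)^{1/2}$ to avoid division by zero and letting $\delta\to 0^+$ — gives $\frac{d}{dt}\|w(t)\|_{L^2_{x,y}} \le \tfrac1\varepsilon\|P(\tilde{\Psi}^\varepsilon)(t)\|_{L^2_{x,y}}$. Integrating over $[0,t]$ and using that the integrand is nonnegative, so that $\int_0^t\le\int_0^T$, yields exactly the claimed bound. Equivalently one may avoid the $\delta$-regularization by writing the Duhamel formula $w(t) = e^{-itH_\varepsilon/\varepsilon}w(0) + \tfrac{i}{\varepsilon}\int_0^t e^{-i(t-s)H_\varepsilon/\varepsilon}P(\tilde{\Psi}^\varepsilon)(s)\,ds$ and estimating with the unitarity of the propagator $e^{-itH_\varepsilon/\varepsilon}$ and Minkowski's integral inequality.

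There is no essential obstacle: the computation is the familiar one showing the Schr\"odinger flow is $L^2$-isometric, the only loss being the factor $1/\varepsilon$ from the semiclassical scaling of $\partial_t$ in (\ref{2.1}). The one point deserving care is the functional-analytic bookkeeping — recording that $w$ lies in $C^1([0,T];L^2_{x,y})\cap C([0,T];\mathcal{D}(H_\varepsilon))$, so that $t\mapsto\|w(t)\|^2$ is genuinely differentiable and the pairing above is licit. This follows from standard semigroup theory for the self-adjoint $H_\varepsilon$, since $\tilde{\Psi}^\varepsilon$ is smooth with $P(\tilde{\Psi}^\varepsilon)\in C([0,T];L^2_{x,y})$, and since the exact solution $\tilde{\Psi}$ of (\ref{2.1}) is obtained by applying the unitary group generated by $H_\varepsilon$ to the initial datum $g(x,y)e^{iS_0(x)/\varepsilon}\in L^2_{x,y}$.
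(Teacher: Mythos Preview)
Your argument is correct and is exactly the standard energy/Duhamel estimate one expects here. The paper does not actually supply a proof of this lemma --- it merely ``recalls'' the well-posedness estimate for (\ref{2.1}) and states the bound --- so your write-up fills in what the paper leaves implicit, by the same route any reader would take.
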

This result when combined with both initial error and  evolution error gives the following.
\begin{cor}\label{cor2.1}
The total error made by the first order Gaussian beam superposition method is  of order $\varepsilon^{1/2}$ in the following sense
$$
\|\tilde{\Psi}-\tilde{\Psi}^\varepsilon\|_{L^2_{x,y}}\leq C\varepsilon^{1/2}.
$$
\end{cor}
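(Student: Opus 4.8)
The plan is to obtain Corollary~\ref{cor2.1} as an immediate consequence of the stability estimate in Lemma~2.1 combined with the two error bounds in Theorem~\ref{th2.1} and Theorem~\ref{Th2.2}. Concretely, I would start from the well-posedness inequality
$$
\|\tilde{\Psi}(t,\cdot)-\tilde{\Psi}^\varepsilon(t,\cdot)\|_{L^2_{x,y}}\le \|\tilde{\Psi}(0,\cdot)-\tilde{\Psi}^\varepsilon(0,\cdot)\|_{L^2_{x,y}}+\frac{1}{\varepsilon}\int_0^T\|P(\tilde{\Psi}^\varepsilon(t,\cdot))\|_{L^2_{x,y}}\,dt,
$$
valid for all $0<t\le T$, and estimate the two terms on the right-hand side separately, noting that the hypotheses of the present corollary ($S_0\in C^3_b$, $g\in H^1$ with compact $x$-support, condition (\ref{1.5}), $V_e\in C^{d+4}_b$) are precisely what is needed to invoke both earlier theorems.

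For the first (initial) term, Theorem~\ref{th2.1} applies directly and yields $\|\tilde{\Psi}(0,\cdot)-\tilde{\Psi}^\varepsilon(0,\cdot)\|_{L^2_{x,y}}\le C\varepsilon^{1/2}$. For the second (evolution) term, I would bound the time integral by $T$ times the supremum over $[0,T]$ and apply Theorem~\ref{Th2.2}:
$$
\frac{1}{\varepsilon}\int_0^T\|P(\tilde{\Psi}^\varepsilon(t,\cdot))\|_{L^2_{x,y}}\,dt\le \frac{T}{\varepsilon}\sup_{0\le t\le T}\|P(\tilde{\Psi}^\varepsilon(t,\cdot))\|_{L^2_{x,y}}\le \frac{T}{\varepsilon}\cdot C\varepsilon^{3/2}=CT\,\varepsilon^{1/2}.
$$
Adding the two contributions and relabeling the constant gives $\|\tilde{\Psi}-\tilde{\Psi}^\varepsilon\|_{L^2_{x,y}}\le C\varepsilon^{1/2}$, uniformly for $0<t\le T$, with $C$ depending on $T$, $N$, the measure of $K_0$, $V_e$, $g$ and $S_0$ but not on $\varepsilon$.

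As for the main obstacle: at the level of the corollary there is essentially none — it is a one-line combination of previously established facts. The only point worth underlining is the balance of powers of $\varepsilon$: the factor $1/\varepsilon$ lost in passing from $P(\tilde{\Psi}^\varepsilon)$ to the solution error in Lemma~2.1 is exactly compensated by the fact that Theorem~\ref{Th2.2} delivers $O(\varepsilon^{3/2})$, i.e.\ one full power of $\varepsilon$ better than the $O(\varepsilon)$ one would get from a zeroth-order ansatz; this is what makes the \emph{first-order} Gaussian beam construction converge, and at the rate $\varepsilon^{1/2}$. The genuine difficulty is upstream, in proving Theorems~\ref{th2.1} and \ref{Th2.2}: the former requires adapting the stationary-phase/Taylor-remainder estimates of Tanushev \cite{Tan08} to the band-structured amplitude $\sum_n a_n(x)\,z_n(\nabla_x S_0,y)$, and the latter requires the non-squeezing control on the Gaussian-beam Hessian $M_n$ from \cite{LRT11} in order to estimate the phase errors after integration in the initial point $x_0$ over $K_0$.
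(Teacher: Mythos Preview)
Your proposal is correct and matches the paper's own approach: the corollary is stated immediately after Lemma~2.1 with the remark that ``this result when combined with both initial error and evolution error gives the following,'' i.e.\ exactly the combination of Theorem~\ref{th2.1}, Theorem~\ref{Th2.2}, and the well-posedness estimate that you carry out. Your observation about the balance of powers of $\varepsilon$ and the location of the real difficulties upstream is also accurate.
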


In order to convert the two-scale result stated in Corollary \ref{cor2.1}  to  the original problem, we prepare the following lemma.
\begin{lemm} \label{lem2.1} Assume that  $f(x,y)\in L^2(\mathbb{R}^d, [-\pi, \pi]^d)$ and $f$ is $2\pi$ periodic in $y.$
Then for sufficiently small $\varepsilon$,
\begin{equation}\label{fxy}
\left\|f\left(x, \frac{x}{\varepsilon}\right)\right\|_{L^2_x} \leq \frac{1}{\pi^{\frac{d}{2}}} \|f(x, y)\|_{L^2_{x,y}}.
\end{equation}
\end{lemm}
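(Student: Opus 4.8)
\emph{Approach.} The plan is to pass to Fourier series in the periodic variable and recognize $\bigl\|f(\cdot,\cdot/\varepsilon)\bigr\|_{L^2_x}^2$ as a small perturbation of an ``ergodic average'' which, by Riemann--Lebesgue, converges to the full two-scale integral. Write $f(x,y)=\sum_{k\in\mathbb{Z}^d}c_k(x)e^{ik\cdot y}$ with $c_k(x)=(2\pi)^{-d}\int_{[-\pi,\pi]^d}f(x,y)e^{-ik\cdot y}\,dy$; Parseval in $y$ followed by integration in $x$ gives $\|f\|_{L^2_{x,y}}^2=(2\pi)^d\sum_{k}\|c_k\|_{L^2_x}^2$. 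Substituting $y=x/\varepsilon$ and expanding the square,
\begin{equation*}
\bigl\|f(\cdot,\cdot/\varepsilon)\bigr\|_{L^2_x}^2=\sum_{k}\|c_k\|_{L^2_x}^2+\sum_{k\neq l}\int_{\mathbb{R}^d}c_k(x)\,\overline{c_l(x)}\,e^{i(k-l)\cdot x/\varepsilon}\,dx ,
\end{equation*}
where the first sum is exactly $(2\pi)^{-d}\|f\|_{L^2_{x,y}}^2$. So it remains to show the cross sum is negligible for small $\varepsilon$.

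\emph{Key step.} Group the cross terms by $m=k-l$: the double sum becomes $\sum_{m\neq0}\int_{\mathbb{R}^d}b_m(x)\,e^{im\cdot x/\varepsilon}\,dx$, where $b_m(x)=\sum_{l}c_{l+m}(x)\overline{c_l(x)}$ is the $m$-th Fourier coefficient in $y$ of $|f(x,y)|^2$, hence $b_m\in L^1_x$. Each integral tends to $0$ as $\varepsilon\to0$ by the Riemann--Lebesgue lemma, and summing over $m$ by dominated convergence gives $\bigl\|f(\cdot,\cdot/\varepsilon)\bigr\|_{L^2_x}^2\to(2\pi)^{-d}\|f\|_{L^2_{x,y}}^2$. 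An equivalent, more hands-on route avoids Fourier series: partition $\mathbb{R}^d$ into the cells $C_j=2\pi\varepsilon j+[0,2\pi\varepsilon)^d$, $j\in\mathbb{Z}^d$, and use $2\pi$-periodicity of $f$ in $y$ together with the change of variables $x=2\pi\varepsilon j+\varepsilon w$ to obtain $\int_{C_j}|f(x,x/\varepsilon)|^2\,dx=\varepsilon^d\int_{[0,2\pi)^d}|f(2\pi\varepsilon j+\varepsilon w,w)|^2\,dw$; summing in $j$ displays the left-hand side, up to the vanishing shift $\varepsilon w$, as $(2\pi)^{-d}$ times a Riemann sum over the lattice $2\pi\varepsilon\mathbb{Z}^d$ of $G(x):=\int_{[0,2\pi)^d}|f(x,w)|^2\,dw$, whose total integral is $\|f\|_{L^2_{x,y}}^2$. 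Either way, since $(2\pi)^{-d}<\pi^{-d}$, the limit lies strictly below $\pi^{-d}\|f\|_{L^2_{x,y}}^2$, so the stated inequality holds for all sufficiently small $\varepsilon$.

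\emph{Main obstacle.} The delicate point --- and the reason the statement carries the generous constant $\pi^{-d/2}$ in place of the sharp $(2\pi)^{-d/2}$, along with the qualifier ``for sufficiently small $\varepsilon$'' --- is the rigorous justification of this limit, where all real estimates are concentrated. First, $f(x,x/\varepsilon)$ must be read as the value of a continuous representative of $f$ (the graph $y=x/\varepsilon$ is null in $\mathbb{R}^d\times[-\pi,\pi]^d$), so one works with $f\in C\cap L^2$. To pass from the Riemann sum to the integral one disposes of the tail in $x$ by a cutoff $\chi_R$ and reduces to $f$ controlled on a fixed bounded set in $x$; there, uniform continuity replaces $f(2\pi\varepsilon j+\varepsilon w,w)$ by $f(2\pi\varepsilon j,w)$ at a cost $O(\omega_f(\varepsilon))\,\|f\|_{L^2_{x,y}}$ vanishing with $\varepsilon$, and what remains is a genuine Riemann sum of the continuous function $G$, converging to $\int_{\mathbb{R}^d}G=\|f\|_{L^2_{x,y}}^2$. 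I expect this bookkeeping --- handling the tail and the $\varepsilon w$-shift uniformly while letting $\varepsilon\to0$ --- to be the main work; the factor-$2^{d/2}$ slack built into the constant is precisely what makes these errors harmless for all small $\varepsilon$.
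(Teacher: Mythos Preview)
Your second route --- partitioning $\mathbb{R}^d$ into $\varepsilon$-cells, using periodicity, and recognizing a Riemann sum of $G(x)=\int_{[0,2\pi)^d}|f(x,w)|^2\,dw$ --- is exactly the paper's argument. The paper too restricts to $|x|\leq R$, rewrites the integral over the cell decomposition, passes to the limit $\varepsilon\to0$ to recover $(2\pi)^{-d}\|f\|_{L^2_{x,y}}^2$, and then absorbs the Riemann-sum error into the slack between $(2\pi)^{-d}$ and $\pi^{-d}$. You are in fact more scrupulous than the paper in flagging that one must work with a continuous representative and control the $\varepsilon w$-shift; the paper simply asserts first-order convergence of the Riemann sum.

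The Fourier-series route is an attractive alternative, but the step ``summing over $m$ by dominated convergence'' is not justified as written. You would need a summable majorant $A_m\geq\bigl|\int_{\mathbb{R}^d} b_m(x)\,e^{im\cdot x/\varepsilon}\,dx\bigr|$ with $\sum_{m\neq0}A_m<\infty$; the obvious choice $A_m=\|b_m\|_{L^1_x}$ is bounded uniformly in $m$ but not summable for $f$ merely in $L^2$ (or even continuous). The issue appears one line earlier too: expanding $|f(x,x/\varepsilon)|^2$ as the double sum $\sum_{k,l}c_k(x)\overline{c_l(x)}\,e^{i(k-l)\cdot x/\varepsilon}$ and swapping with $\int_{\mathbb{R}^d}dx$ is formal, since the Fourier series of $f(x,\cdot)$ converges only in $L^2_y$, not pointwise at the single value $y=x/\varepsilon$. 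This is easily repaired --- prove the identity first for $f$ that is a trigonometric polynomial in $y$ (where the sums are finite and Riemann--Lebesgue applies term by term), then pass to general $f$ by $L^2$-density, again using the gap between $(2\pi)^{-d}$ and $\pi^{-d}$ to swallow the approximation error --- but as stated the Fourier paragraph is heuristic. Since your rigorous bookkeeping in the ``Main obstacle'' section is entirely about the Riemann-sum approach, and that approach matches the paper's, the overall proposal is sound.
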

\begin{proof}
Denote $\displaystyle Y_\varepsilon^k=[2\pi k\varepsilon,2\pi(k+1)\varepsilon]^d$ and let $ I_\varepsilon=\{k\in\mathbb{Z}^d,\quad Y_\varepsilon^k\cap [-R, R]^d\neq\emptyset\}$ for any fixed $R>0$.
Then,
\begin{equation*}
\int_{|x|\leq R}f^2\left(x,\frac{x}{\varepsilon}\right)dx\leq\sum_{k\in I_\varepsilon}\int_{Y_\varepsilon^k}f^2\left(x,\frac{x}{\varepsilon}\right)dx.
\end{equation*}
Here $|x|$ denotes $l^\infty$-- norm of the vector $x,$ hence $|x|\leq R$ corresponds to a $d$-dimensional cube.
Introducing a change of variable $y=\frac{x}{\varepsilon}$ and taking advantage of the periodicity in $y,$ one can rewrite the right hand side of the above expression in the shifted cell form:
\begin{equation*}
\int_{|x|\leq R}f^2\left(x,\frac{x}{\varepsilon} \right)dx\leq\sum_{k\in I_\varepsilon}\varepsilon^d\int_{|y|\leq\pi}f^2(\varepsilon(y+2\pi k),y)dy.
\end{equation*}
For fixed $y$ the right hand side corresponds to the Riemann sum of the function $$\displaystyle g^2(x)=\int_{|y|\leq\pi} f^2(x+ \varepsilon y,y)dy$$ sampled at $x_k=2\pi k  \varepsilon$. Note that
the step size in all direction $\Delta x_k= (2\pi  \varepsilon)^d$, hence
\begin{align*}
\sum_{k\in I_\varepsilon}\varepsilon^d \int_{|y|\leq\pi}f^2(\varepsilon(y+2\pi k),y)dy & = \frac{1}{(2\pi)^d} \sum_{k\in I_\varepsilon} g^2(x_k)\Delta x_k \\
&  \rightarrow\frac{1}{(2\pi)^d}\int_{|x|\leq R} \int_{|y|\leq\pi}f^2(x,y)dydx\quad\mbox{as}\quad\varepsilon\rightarrow0,
\end{align*}
with the first order of convergence. Therefore,
\begin{equation*}
\int_{|x|\leq R}f^2\left(x,\frac{x}{\varepsilon} \right)dx\leq\frac{1}{(2\pi)^d}\int_{|x|\leq R} \int_{|y|\leq\pi}f^2(x,y)dydx+C\varepsilon.
\end{equation*}
Taking $\displaystyle C=(2\pi)^{-d} \|f\|^2_{L^2_{x, y}}$ and $\varepsilon<1$, then the right hand side is bounded above by $$\displaystyle\frac{1}{2^{d-1}\pi^d}\int_{\mathbb{R}} \int_{|y|\leq\pi}f^2(x, y)dydx.$$  Passing limit $R\to \infty$ leads to the desired estimate (\ref{fxy}).
\end{proof}
Set the error in two scale setting as
$$
e(t, x, y)=\tilde{\Psi}(t, x, y) - \tilde{\Psi}^\varepsilon(t, x, y),
$$
then the error in original variable gives
$$
\Psi(t, x) - \Psi^\epsilon(t, x) = e(t, x, x/\varepsilon).
$$
 Applying Lemma \ref{lem2.1} and using Corrolary \ref{cor2.1} we prove Theorem $1.1$ for the original problem.

\section{Construction}
In this section we  first review the classical asymptotic approach and the band structure,   then the Gaussian beam construction following \cite{DGR06}. For simplicity, the construction and proofs are presented in one-dimensional setting.
\subsection*{Asymptotic Approach}
We look for an approximate solution to (\ref{2.1}) of the form:
\begin{equation}\label{3.1}
\tilde{\Psi}^{\varepsilon}(t,x,y)=A(t,x,y)e^{i\Phi(t,x)/\varepsilon},
\end{equation}
where $$A(t,x,y)=A_0(t,x,y)+A_1(t,x,y)\varepsilon+\dots +A_l(t,x,y)\varepsilon^l,$$
with $A_i$ satisfying:
$$A_i(t,x,y)=A_i(t,x,y+2\pi),\quad i=0,\dots l.$$
\\Then the two-scale Schr\"odinger operator $P$ defined in (\ref{2.11}) when applied upon $\tilde{\Psi}^{\varepsilon}$ gives
$$P(\tilde{\Psi}^\varepsilon)=(c_0+c_1\varepsilon+c_2\varepsilon^2+\dots+c_{l+2}\varepsilon^{l+2})e^{i\Phi/\varepsilon},$$
where by a direct calculation,
\begin{eqnarray}\label{3.2-3.4}
c_0&=&[-\partial_t \Phi -\frac{1}{2}(-i\partial_y+ \partial_x \Phi)^2-V(y)-V_e(x)]A_0=: G(t,x,y)A_0,\label{3.3}
\\c_1&=&i\partial_tA_0+\frac{1}{2}(2\partial_x\cdot\partial_y+2i \partial_x \Phi \cdot\partial_x+i\partial_x^2 \Phi)A_0+G(t,x,y)A_1=:iLA_0+GA_1,
\\c_j&=&\partial_x^2A_{j-2}+iLA_{j-1}+GA_j,\quad j=2,3,\dots,l+2.\label{3.4}
\end{eqnarray}
Here
$$
L:=\partial_t+(-i\partial_y+\partial_x\Phi)\partial_x + \frac{1}{2} \partial_x^2 \Phi.
$$
Observe that, when $\Phi$ is real valued, (\ref{3.1}) is  a standard ansatz of the geometric optics \cite{DGR06}. In the construction of geometric optic solutions it is required that $c_j=0, j=0,1,\dots l+2,$ which gives PDEs for $\Phi, A_0, \cdots, A_l$.  However, $\Phi$ may develop finite time singularities at `caustics' and equations for $A_j$ then become undefined \cite{DGR06}.

\subsection*{Band Structure/Bloch Decomposition}%
The relation $c_0=0$ can be rewritten as
\begin{equation}\label{3.5}
(\Phi_t+H(\partial_x \Phi,y)+V_e(x))A_0=0,
\end{equation}
where $H(k,y)$ with $k=\partial_x \Phi$ is a self-adjoint differential operator, when $k$ is real.
\begin{equation}
H(k,y)=\frac{1}{2}(-i\partial_y+k)^2+V(y).
\end{equation}
We let $z_n$ be the normalized eigenfunction corresponding to $E_n(k)$:
$$
H(k, y)z_n=E_n(k)z_n, \quad \langle z_n, z_n \rangle=1.
$$
From now on we will suppress the index $n,$ since the construction for each band remains the same.

We set the leading amplitude as
 \begin{equation}\label{3.7}
A_0(t,x,y)=a(t,x)z(k(t,x),y),
 \end{equation}
where $k=\partial_x \Phi$, hence (\ref{3.5}) is satisfied as long as $\Phi$ solves the Hamilton-Jacobi equation:
\begin{equation}\label{3.8}
F(t, x):=\partial_t \Phi+E(\partial_x \Phi)+V_e(x)=0.
\end{equation}

\subsection*{A Bloch Decomposition-Based Gaussian Beam Method} Let
$\displaystyle(x, p)=(\tilde{x}(t),p(t))$ be a bicharacteristics of (\ref{3.8}), then
\begin{equation}\label{3.10}
\dot{\tilde{x}}=E^\prime(p),\quad \dot{p}=-V^\prime_e(\tilde{x}).
\end{equation}
From now on, we fix a bi-characteristics $\{(\tilde x(t), p(t)), t>0\}$ with initial data $(x_0, \partial_xS_0(x_0))$ for any $x_0\in K_0=supp_x(g(x,y))$.
We denote by $\gamma$  its projection into the $(x, t)$ space.

The idea underlying the Gaussian beam method is to build asymptotic solutions concentrated on a single ray $\gamma$ so that $\Phi(t, \tilde x(t))$ is real and $Im\{\Phi(t,y)\}>0$ for $y\neq \tilde x(t).$ We are going to choose $\Phi$ so that $Im(\Phi)\geq cd(x,\gamma)^2,$ where $d(x,\gamma)$ is a distance from $x$ to
the central ray  $\gamma$ \cite{JR05}. 
Therefore, instead of solving (\ref{3.8}) exactly, we only need to have $F(x, t)$ vanish to higher order on $\gamma$. For the first order Gaussian beam approximation we choose the phase
$\Phi(t,x)$ a quadratic function:
\begin{equation}\label{3.11}
\Phi(t,x)=S(t)+p(t)(x-\tilde{x}(t))+\frac{1}{2}M(t)(x-\tilde{x}(t))^2.
\end{equation}
With this choice we have
\begin{equation}\label{3.12}
F(t, x)=\dot{S}+\dot{p}(x-\tilde{x})-p\dot{\tilde{x}}+\frac{1}{2}\dot{M}(x-\tilde{x})^2-M(x-\tilde{x})\dot{\tilde{x}}+E(p+M(x-\tilde{x}))+V_e(x).
\end{equation}
We see that $F(t, \tilde x(t))=0$ gives the evolution equation for $S$,
$$
\dot{S}=pE^\prime(p)-E(p)-V_e(\tilde x).
$$
It can be verified $\partial_x F(t, \tilde x(t))=0$ is equivalent to $\dot{p}=-V^\prime_e(\tilde{x})$, which is the second equation in (\ref{3.10}).  From $\partial_x^2F(t, \tilde x(t))=0$ we obtain the equation
for $M$:
 \begin{equation}\label{3.13}
 \dot{M}=-E^{\prime\prime}(p)M^2-V^{\prime\prime}_e(\tilde{x}).
 \end{equation}
It is clear that we should set initial condition for the phase as
\begin{equation}
S(0)=S_0(x_0),
\end{equation}
where $S_0$ is a given initial phase in (\ref{1.2}). Note that equation (\ref{3.13}) is a nonlinear Ricatti type equation. The important result about $M$ is given in \cite{DGR06}, proving that global solution for $M$ exists and $Im(M)$ remains positive (positive definite in multi-dimensional setting) for all time $t$ as long as $Im(M(0))$ is positive. Therefore we choose
\begin{equation}
M(0)=\partial_x^2S_0(x_0)+i,
\end{equation}
which satisfies $Im(M(0))>0$ as required in the Gaussian beam approximation.

It follows from our construction that $c_0$ vanishes up to third order on $\tilde{x}.$  In fact,
\begin{align}\label{c0}
c_0 & =G(az(k(t, x), y))\\ \notag
 &=a(t, x)F(t, x)z(k(t, x), y)\\ \notag
 &=\frac{a(t, x)}{3!}\partial_x^3 F(t, x^*)z(k(t, x), y)(x-\tilde x)^3,
\end{align}
where $x^*$ is an intermediate value between $x$ and $\tilde x$.  A simple calculation gives
\begin{equation}\label{3.17}
\partial_x^3F(t, x^*)=(V_e^{(3)}(x^*)+E^{(3)}(p+ M(x^*-\tilde x ))M^3(t)),
\end{equation}
which is uniformly bounded near the ray $\tilde x$ since $V_e\in C_b^5(\mathbb{R})$ and (\ref{1.5}) holds.  Hence $c_0$ will be bounded by $O(|x-\tilde x|^3)$ as long as the amplitude is bounded.

\subsection*{Equation for the Amplitude}  For the first order Gaussian beam construction, we shall determine the amplitudes  so that $c_1$ vanishes to the first order on $\gamma$. Note that
$$c_1=iLA_0+GA_1,$$
where
$$
G=-(\Phi_t+H(k,y)+V_e(x))= - F(t, x) +E(k)-H(k, y).
$$
On the ray $x=\tilde x(t)$, we require that $c_1=0$, that is
$$
iLA_0 +(E(p)-H(p, y))A_1=0.
$$
In order for $A_1$ to exist,  it is necessary that
\begin{equation}\label{A_0}
\langle LA_0,  z  \rangle|_{x=\tilde x (t)}=0.
\end{equation}
For $x\not=\tilde x(t)$, we have
$$
c_1=iLA_0-FA_1 +(E(k)-H(k, y))A_1^\top,
$$
where $A_1^\top$ contains the orthogonal compliment of $z$, satisfying $\langle A_1^\top, z\rangle=0$. 
We let
\begin{equation}\label{A_1}
A_1^\top =i (E(k)-H)^{-1} [\langle LA_0,z\rangle{z}-L(A_0)].
\end{equation}
Therefore  using (\ref{A_0}) and Taylor expansion at $\tilde x$,
\begin{equation}\label{c1}
c_1=i\langle LA_0,z\rangle -FA_1=i\partial_x \langle LA_0, z\rangle(t, x^*)(x-\tilde x)-FA_1.
\end{equation}
With further refined calculation,  (\ref{A_0}) and  (\ref{c1}) yield the following result.
\begin{lemm}\label{lem3.1}
 For the first order Gaussian beam construction,  $a(t, x)=a(t;x_0)$  and satisfies the following evolution equation along the ray $x=\tilde x(t)$:
\begin{equation}
a_t=a\Big(V_e^\prime(\tilde{x})\langle \partial_k z(p, \cdot), z(p, \cdot)\rangle-\frac{1}{2}E^{\prime\prime}(p)M\Big).
\end{equation}
Moreover, for $x\not=\tilde x(t)$ we have
\begin{align}\label{3.22+}
c_0 & =\frac{a(t;x_0)}{3!}\partial_x^3F(t,x^*) z(k, y)(x-\tilde x)^3,\\ \label{3.22}
c_1 & =-ia \langle \partial_k z(p, \cdot), z(p, \cdot)\rangle(E^{\prime\prime}(p)M^2+V_e^{\prime\prime}(\tilde{x}))(x-\tilde{x})-F(t, x)A_1, \\ \label{3.22++}
c_2 &=a(t;x_0)M^2\partial_k^2z(k, y)+iLA_1,
\end{align}
where $A_1\in \text{span}\{A_1^\top,  z\}$.
\end{lemm}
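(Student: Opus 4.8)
\smallskip
\noindent The plan is to substitute the first--order ansatz $A_0=a(t)z(k(t,x),y)$, $k=\partial_x\Phi$, into the coefficient recursion for $c_0,c_1,c_2$, and to simplify each $c_j$ by means of the Bloch relation $H(k,y)z=E(k)z$ together with its $k$--derivatives. The first thing I would record are the Feynman--Hellmann--type identities that underlie everything: differentiating $Hz=Ez$ once in $k$ gives $(H-E)\partial_kz=E'(k)z-(-i\partial_y+k)z$, whence $\langle(-i\partial_y+k)z,z\rangle=E'(k)$; differentiating again and pairing with $z$ yields $\langle(-i\partial_y+k)\partial_kz,z\rangle=\tfrac12(E''(k)-1)+E'(k)\langle\partial_kz,z\rangle$. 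I would also use that $\langle\partial_kz,z\rangle$ is purely imaginary, a consequence of $\langle z,z\rangle\equiv1$. These are the algebraic backbone of the whole computation.

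For $c_0$: since $Hz=E(\partial_x\Phi)z$, the operator $G$ acting on $A_0$ collapses, so that $c_0$ is (up to sign) $a(t,x)F(t,x)z(k,y)$ with $F=\partial_t\Phi+E(\partial_x\Phi)+V_e$. The quadratic phase \eqref{3.11} is built precisely so that $F$, $\partial_xF$ and $\partial_x^2F$ vanish on $\gamma$ --- equivalently, so that $S$, $p$ and $M$ obey their evolution laws, in particular $\dot p=-V'_e(\tilde x)$ and the Riccati equation \eqref{3.13}. A third--order Taylor expansion of $F$ at $\tilde x(t)$ then produces \eqref{3.22+}, with $\partial_x^3F=E^{(3)}(\partial_x\Phi)M^3+V_e^{(3)}$, which stays bounded near $\gamma$ under the stated regularity and the gap condition \eqref{1.5}.

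For $c_1=iLA_0+GA_1$: I would split $A_1=\alpha z+A_1^{\top}$ with $A_1^{\top}\perp z$. Since $LA_0-\langle LA_0,z\rangle z$ lies in $z^{\perp}$, the choice \eqref{A_1} makes the $(E-H)$--part of $GA_1$ cancel $i(LA_0-\langle LA_0,z\rangle z)$, leaving $c_1=i\langle LA_0,z\rangle z-FA_1$. The Fredholm solvability condition \eqref{A_0} is then what determines $a$: computing $LA_0$ from $L=\partial_t+(-i\partial_y+\partial_x\Phi)\partial_x+\tfrac12\partial_x^2\Phi$ and $A_0=a(t)z(k(t,x),y)$, using $\partial_xk=M$ and $\partial_tk|_{x=\tilde x}=\dot p-M\dot{\tilde x}=\dot p-ME'(p)$, and pairing with $z$ through the two identities above, the terms carrying $E'(p)\langle\partial_kz,z\rangle$ --- one coming from $\partial_tk$ restricted to the ray, one from $\langle(-i\partial_y+k)\partial_kz,z\rangle$ --- cancel on $\gamma$, and there remains the transport law $\dot a=a\bigl(V'_e(\tilde x)\langle\partial_kz,z\rangle-\tfrac12E''(p)M\bigr)$; in particular $a$ is a function of $t$ alone along the ray, which is the assertion $a(t,x)=a(t;x_0)$. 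For $x\neq\tilde x(t)$, $\langle LA_0,z\rangle$ vanishes on $\gamma$, so Taylor's theorem writes it as $\partial_x\langle LA_0,z\rangle(t,x^{*})(x-\tilde x)$ as in \eqref{c1}; evaluating that derivative with the help of \eqref{3.13} and $\dot p=-V'_e(\tilde x)$ gives the $(x-\tilde x)$ term in \eqref{3.22}, while $FA_1=O(|x-\tilde x|^{3})$ since $F$ vanishes to third order.

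Finally, for $c_2=\partial_x^2A_0+iLA_1+GA_2$ I would set $A_2=0$, this being a first--order beam; two $x$--derivatives of $A_0=a(t)z(k(t,x),y)$ each bring down a factor $\partial_xk=M$, so $\partial_x^2A_0=aM^2\partial_k^2z(k,y)$, and \eqref{3.22++} follows. The step I expect to be the real obstacle is the ``refined calculation'' behind the amplitude equation and \eqref{3.22}: one has to keep scrupulous track of where $\partial_kz$, $\partial_k^2z$ and the purely imaginary Berry connection $\langle\partial_kz,z\rangle$ enter $LA_0$, and verify that the cancellations described above take place only after the bicharacteristic system \eqref{3.10} and the Riccati equation \eqref{3.13} are invoked; everything else is routine Taylor expansion. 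The hypotheses $V_e\in C_b^{5}(\mathbb{R})$ and \eqref{1.5} are exactly what keeps the relevant coefficients ($E^{(3)}$, $M$, and the $\partial_k^{j}z$) bounded along $\gamma$, which is all that the subsequent error estimates will use.
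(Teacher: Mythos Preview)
Your proposal is correct and follows essentially the same route as the paper: both reduce the amplitude equation to computing $\langle LA_0,z\rangle$ via the identity obtained by differentiating $Hz=Ez$ twice in $k$ (the paper writes it as $E''(k)=1+2\langle H_k\partial_kz,z\rangle-2E'\langle\partial_kz,z\rangle$, which is your second Feynman--Hellmann relation), observe the cancellation of the $E'(p)M\langle\partial_kz,z\rangle$ contributions on $\gamma$ coming from $k_t$ and from $\langle H_k\partial_kz,z\rangle$, and then Taylor expand for the off-ray formulas, invoking the Riccati equation \eqref{3.13} to rewrite the linear term in $c_1$. Your presentation is in fact slightly more careful in a couple of places --- you retain the factor $z$ in $c_1=i\langle LA_0,z\rangle z-FA_1$ and you flag that the Berry connection $\langle\partial_kz,z\rangle$ is purely imaginary --- but the argument and the key algebraic identities are identical to the paper's.
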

\begin{proof}
Recall that
$$
A_0=az(k(t, x), y), \quad k(t,x)=p(t)+M(t)(x-\tilde{x}(t))
$$
and
$$
L=\partial_t+H_k(k,y) \partial_x+\frac{1}{2}\partial_x^2\Phi=\partial_t+H_k(k,y) \partial_x+\frac{1}{2}M.
$$
We take $a(t, x)=a(t; x_0)$, and calculate
\begin{align*}
\langle L(az),z\rangle &=\partial_t a+ \frac{1}{2} aM + a \langle \partial_t z, z\rangle +a  \langle H_k \partial_x z,z\rangle \\
&=\partial_t a+ a\left( \frac{1}{2} M + k_t \langle \partial_k z, z\rangle +  k_x  \langle H_k \partial_k z,z\rangle \right).
\end{align*}
We  observe that the eigenvalue identity $\displaystyle Hz =Ez$ holds for any $k$, implying
$$
H_{kk}z+2H_k \partial_k z +H \partial_k^2 z =E^{\prime\prime}(k)z+2E^\prime \partial_k z +E \partial_k^2 z.
$$
This  against $z$  using $H_{kk}=1$ and $\langle(H-E)\partial_k^2 z,z\rangle=0$ leads to
$$
E^{\prime\prime}(k) =1+2\langle H_k\partial_k z,z\rangle-2E^\prime\langle \partial_k z,z\rangle.
$$
Hence using $k_x=M$ we have
$$
 \frac{1}{2} M  +k_x \langle H_k \partial_k z,z\rangle=\frac{1}{2}E^{\prime\prime}(k) M +E^\prime M \langle \partial_k z,z\rangle.
$$
Putting together we obtain
\begin{align*}
\langle L(az),z\rangle =\partial_t a+ a\left( \frac{1}{2}E^{\prime\prime}(k) M +  (k_t + E^\prime M)  \langle \partial_k z, z\rangle  \right),
\end{align*}
where
$$
k_t=  - V_e'(\tilde x) - E'(p)M+\dot M(x-\tilde x(t)).
$$
Thus (\ref{A_0}) gives the desired amplitude equation.  Recalling (\ref{c0}) and (\ref{3.17}) we  have (\ref{3.22+}).
(\ref{c1}) yields
$$
c_1=ia\dot M  \langle \partial_k z, z\rangle (x-\tilde x)-FA_1,
$$
which in virtue of (\ref{3.13}) gives (\ref{3.22}).  From (\ref{3.4}) it follows that
$$
c_2=\partial^2_x (az) +iLA_1=a(k_x)^2\partial_{k}^2z+iLA_1
$$
which gives (\ref{3.22++}).
\end{proof}
Therefore, the system of ODEs for GB components is set up:
\begin{equation}\label{3.23}
\begin{cases} \dot{\tilde{x}}=E^\prime(p),\quad \tilde{x}|_{t=0}=x_0, \\
\dot{p}=-V^\prime_e(\tilde{x}),\quad p|_{t=0}=\partial_xS_0(x_0), \\
\dot{S}=pE^\prime(p)-E(p)-V_e(\tilde{x}),\quad S|_{t=0}=S_0(x_0), \\
\dot{M}=-E^{\prime\prime}(p)M^2-V^{\prime\prime}_e(\tilde{x}),\quad M|_{t=0}=\partial_{x}^2S_0(x_0)+i,\\
\dot{a}=a(V_e^\prime(\tilde{x}) \langle \partial_k z(p, \cdot), z(p, \cdot)\rangle -\frac{1}{2}E^{\prime\prime}(p)M),\quad a|_{t=0}=a(x_0),\\
\end{cases}
\end{equation}
where  the initial value for the amplitude $a(t;x_0)$ is taken as
\begin{equation}
a|_{t=0}=a(x_0)=\int_0^{2\pi}g(x,y)\overline{z(\partial_x S_0,y)}dy.
\end{equation}

\begin{rmk}
For the derivation of the equations for the Gaussian beam components for the higher order approximations, we refer the reader to \cite{DGR06}.
\end{rmk}
In order to complete the estimate for $c_i$, we still need to estimate $A_1$.  The following result will be used later in the estimate of the evolution error.

 \begin{lemm}\label{lem3.2-}
For any positive integer $m,$  each eigenvector $z_n(k,y)$ satisfies the following condition:
\begin{equation}\label{326}
\sum_{|\beta_1| \leq m, |\beta_2| \leq 3}  \|\partial_k^{\beta_1}\partial_y^{\beta_2} z_n(k, y)\|_{L^2_y}\leq Z<\infty.
\end{equation}
\end{lemm}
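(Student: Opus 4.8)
The plan is to combine three standard ingredients: elliptic regularity in $y$ applied to the eigenvalue equation, repeated differentiation of that equation in $k$ inverted through the reduced resolvent $(H-E_n)^{-1}$, and a compactness argument in $k$ that exploits the analyticity guaranteed by the gap condition (\ref{1.5}). Throughout, $n$ is fixed and the normalization $\|z_n(k,\cdot)\|_{L^2_y}=1$ is kept.

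First I would settle the $y$-regularity. For fixed $k$, rewrite $H(k,y)z_n=E_n(k)z_n$ as the elliptic ODE
$$-\tfrac12\,\partial_y^2 z_n=\big(E_n(k)-\tfrac12 k^2-V(y)\big)z_n+ik\,\partial_y z_n.$$
Since $z_n$ belongs to the form domain $H^1_y$ and is normalized, the right-hand side is in $L^2_y$, so $z_n\in H^2_y$; differentiating once and using $V\in C^1$ gives $z_n\in H^3_y$, and differentiating once more and using $V\in C^2$ gives $z_n\in H^4_y$, with $\|z_n\|_{H^4_y}$ bounded by a fixed polynomial in $|k|$, $|E_n(k)|$ and $\|V\|_{C^2}$. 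In particular $\|\partial_y^{\beta_2}z_n\|_{L^2_y}$ is controlled for $|\beta_2|\le 3$ (indeed $\le 4$).

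Next comes the $k$-regularity, which is the heart of the matter. Using $\partial_k H=-i\partial_y+k$, $\partial_k^2 H=1$ and $\partial_k^j H=0$ for $j\ge 3$, the Leibniz rule applied to $Hz_n=E_nz_n$ gives, for every $m\ge1$, the finite recursion
$$(H-E_n)\,\partial_k^m z_n=\sum_{j=0}^{m-1}\binom{m}{j}\big(\partial_k^{m-j}E_n-\partial_k^{m-j}H\big)\partial_k^j z_n,$$
whose right-hand side involves only the lower-order derivatives $\partial_k^j z_n$ ($j<m$), the first-order differential operator $-i\partial_y+k$, and the scalars $\partial_k^\ell E_n$. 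Choosing the analytic gauge so that $\langle\partial_k z_n,z_n\rangle=0$ (Feynman--Hellmann then reads $\partial_k E_n=\langle(-i\partial_y+k)z_n,z_n\rangle$), one checks by induction on $m$ that this right-hand side is orthogonal to $z_n$ up to an explicitly computable multiple of $z_n$; applying the reduced resolvent $(H-E_n)^{-1}$, bounded on $z_n^\perp$ by $1/\delta_n(k)$ with $\delta_n(k)=\min\{E_n-E_{n-1},\,E_{n+1}-E_n\}>0$ by (\ref{1.5}), then bounds $\|\partial_k^m z_n\|_{L^2_y}$ in terms of the data and the already-controlled $\|\partial_k^j z_n\|_{L^2_y}$, $j<m$. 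Since $(H-E_n)^{-1}$ maps $H^2_y$ into $H^4_y$ (using only $V\in C^2$) and $-i\partial_y+k$ maps $H^4_y$ into $H^3_y$, running the same recursion in the $y$-Sobolev scale shows $\partial_k^m z_n\in H^4_y$ for every $m$; together with the first step this yields $\partial_k^{\beta_1}\partial_y^{\beta_2}z_n\in L^2_y$, with a bound, for all $|\beta_1|\le m$, $|\beta_2|\le 3$.

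Finally, every bound produced above depends on $k$ only through the quantities $|k|$, $|E_n(k)|$, $|\partial_k^\ell E_n(k)|$ ($\ell\le m$) and $1/\delta_n(k)$, all of which are continuous in $k$ under (\ref{1.5}), where $E_n$ and the $n$-th eigenprojection are real-analytic in $k$. Since the relevant $k$-set is compact — the real Brillouin zone $[-1/2,1/2]^d$, respectively the compact complex neighbourhood of $\{p(t):0\le t\le T\}$ on which the analytic continuation and the gap persist — the supremum of these quantities is finite, which gives the uniform constant $Z$ in (\ref{326}). I expect the only genuinely delicate point to be the bookkeeping in the $k$-recursion: verifying at each order that the accumulated right-hand side is orthogonal to $z_n$, so that the reduced resolvent is applicable, and that the part along $z_n$ fixed by the normalization stays bounded; everything else is routine elliptic bootstrapping and compactness.
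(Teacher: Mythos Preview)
Your proposal is correct and follows essentially the same strategy as the paper: differentiate the eigenvalue equation in $k$ via the Leibniz rule (using $\partial_k^jH=0$ for $j\ge 3$) and then apply elliptic regularity in $y$ inductively. The paper's version is shorter because it leans on the analyticity of $z_n$ in $k$ stated earlier and invokes classical elliptic regularity (Gilbarg--Trudinger) directly to obtain $\partial_k^\beta z_n\in C^3(\Gamma)$ in $y$; you instead work in Sobolev spaces and reconstruct $\partial_k^m z_n$ via the reduced resolvent $(H-E_n)^{-1}$, which yields explicit bounds and lets you handle the uniformity in $k$ by compactness --- a point the paper glosses over.

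One remark on the ``delicate point'' you flag: the orthogonality of the right-hand side to $z_n$ is automatic, not something you need to check order by order. Since $H-E_n$ is self-adjoint and $(H-E_n)z_n=0$, pairing $(H-E_n)\partial_k^m z_n=\mathrm{RHS}$ with $z_n$ gives $\langle\mathrm{RHS},z_n\rangle=\langle\partial_k^m z_n,(H-E_n)z_n\rangle=0$. So the Fredholm solvability condition holds for free, and the reduced resolvent always applies; only the component of $\partial_k^m z_n$ along $z_n$ needs to be fixed by the normalization/gauge, which is the routine bookkeeping you mention.
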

\begin{proof}  For every fixed $k$, let $(z(k, y), E(k))$ be an eigen-pair that satisfies  the eigenvalue equation (\ref{1.4}), i.e,
\begin{equation}\label{he}
H(k, y)z(k, y)= \left( \frac{1}{2} (-i\nabla _y+k)^2 +V(y) \right)z(k, y)=E(k)z(k, y), \quad y\in \Gamma.
\end{equation}
Since $v\in C^2(\Gamma)$ and $E \in L^\infty$, by the elliptic regularity theory \cite[Theorem 6.19]{GT98}, $z(k,y)\in C^3(\Gamma)$ in $y$ variable, which gives (\ref{326}) for $m=0$.

We next prove (\ref{326}) by induction. We assume that $\partial_k^\beta z \in C^3(\Gamma)$ for $|\beta|\leq l-1$ with  $1\leq l\leq m-1$.
Note that $\partial_k^\alpha H=0$ for any $\alpha$ with $|\alpha| \geq 3$, then differentiation of (\ref{he}) to higher order,  using the general Leibnitz rule, gives
\begin{align}\label{hea}
(H(k, y)-E(k))\partial_k^\beta z & =\sum_{\alpha <\beta} \left(\begin{array}{c}
\beta \\
\alpha
\end{array}\right) (\partial_k^{\beta-\alpha} E(k))(\partial_k^\alpha z) \\ \notag
& \quad -\sum_{\alpha <\beta, |\alpha|\geq |\beta|-2} \left(\begin{array}{c}
\beta \\
\alpha
\end{array}\right)
(\partial_k^{\beta-\alpha} H(k, y))(\partial_k^\alpha z).
\end{align}
The same elliptic regularity theory when applied to (\ref{hea}) yields
$$
\partial_k^\beta z \in C^3(\Gamma), \quad |\beta|=l.
$$
Here the needed $\partial_k^{\beta-\alpha}E \in L^\infty$ is ensured again by assumption (\ref{1.5}).  The proof of  (\ref{326}) is complete.
\end{proof}

\begin{lemm}\label{lem3.2} With the eigenvector $z(k,y)$ satisfying (\ref{326}),
we have that for $\alpha=0,1,$
\begin{equation}\label{a1}
\sup_{t,x_0}\int_0^{2\pi}|L^\alpha A_1|^2dy\leq CZ(1+Z+Z^2),
\end{equation}
where $C$ depends on the spectral gap $\displaystyle\Delta E=\min_{i\neq j}|E_i-E_j|>0$ and the Gaussian beam components.
\end{lemm}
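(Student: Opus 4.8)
The plan is to estimate $A_1$ and $LA_1$ directly from the closed form (\ref{A_1}) for $A_1^{\top}$ together with Lemma~\ref{lem3.1}, which gives $A_1 = A_1^{\top} + bz$ with $b$ a bounded scalar Gaussian beam quantity for the first–order construction (one may take $b\equiv 0$). Three facts are used throughout. (a) By the spectral separation (\ref{1.5}), the reduced resolvent $R(k) := \big((E(k) - H(k,\cdot))|_{z(k,\cdot)^{\perp}}\big)^{-1}$, composed with the orthogonal projection onto $z(k,\cdot)^{\perp}$, is a bounded operator on $L^2_y$ with $\|R(k)\| \le (\Delta E)^{-1}$, and it annihilates $z$. (b) Since $H(k,\cdot)$ is a second–order elliptic operator on the circle $[0,2\pi]$ with $C^2$ coefficients, elliptic regularity makes $R(k)$ a bounded map $L^2_y \to H^2_y$, with constant depending on $\Delta E$, $\|V\|_{C^2}$ and $\sup|k|$ over the region of interest; consequently the first–order operator $H_k(k,y) = -i\partial_y + k$ may be applied, with an $L^2_y$ bound, to anything in the range of $R(k)$. (c) By Lemma~\ref{lem3.2-}, $\|\partial_k^{\beta_1}\partial_y^{\beta_2} z\|_{L^2_y} \le Z$ for $|\beta_1| \le m$, $|\beta_2| \le 3$, and the Gaussian beam components $\tilde x, p, S, M, a$ and their time derivatives (through (\ref{3.23})) are bounded on $[0,T]$ uniformly in $x_0 \in K_0$. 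Since $\|z\|_{L^2_y} = 1$ we have $Z \ge 1$, so it suffices to bound $\|A_1\|_{L^2_y}$ and $\|LA_1\|_{L^2_y}$ by a polynomial in $Z$ with coefficients depending only on $\Delta E$ and the beam components, and then square.

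For the case $\alpha = 0$ I would first write $LA_0 = L(az)$ out explicitly. Using $L = \partial_t + H_k(k,y)\partial_x + \tfrac12 M$, $A_0 = a(t)\,z(k(t,x),y)$, $k = p + M(x - \tilde x)$ and $k_x = M$, one gets $LA_0$ as a linear combination of $z$, $\partial_k z$ and $\partial_y\partial_k z$ whose coefficients are products of bounded beam components, affine in $x - \tilde x$; hence $\|LA_0\|_{L^2_y} \le C(1 + Z)$ by fact (c). A one–line computation gives $\langle\, \langle LA_0, z\rangle z - LA_0,\; z\,\rangle = 0$, so $w := \langle LA_0, z\rangle z - LA_0$ belongs to $z^{\perp}$, and fact (a) yields $\|A_1^{\top}\|_{L^2_y} = \|R(k)w\|_{L^2_y} \le (\Delta E)^{-1}\|w\|_{L^2_y} \le C(1 + Z)$.

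The case $\alpha = 1$ is the heart of the proof. Since $A_1^{\top} = iR(k)w$ depends on $(t,x)$ only through $k = k(t,x)$ and through the explicit $a(t), M(t)$ inside $w$, the chain rule writes $LA_1^{\top}$ as a finite sum of terms, each a bounded beam coefficient times $R(k)$ (appearing once, or twice via $\partial_k R(k) = -R(k)(E'(k) - H_k)R(k) + (\text{eigenprojection–derivative terms})$), possibly composed with $H_k(k,y)$, applied to one of $w$, $\partial_k w$, $\partial_t w$ (or a single $y$–derivative thereof, produced by an $H_k$ factor). Expanding $w$, $\partial_k w$, $\partial_t w$ in terms of $z, \partial_k z, \partial_k^2 z, \partial_y\partial_k z, \partial_y\partial_k^2 z$ — all bounded in $L^2_y$ by $Z$ via fact (c) with $m \ge 2$ — one uses fact (a), including the cancellation $R(k)z = 0$, which deletes the $z$–components of $\partial_k w, \partial_t w$ where the largest inner products would otherwise sit, to absorb the $R(k)$ factors, and fact (b) to bound the $y$–derivatives coming from the $H_k$ factors. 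Assembling the finitely many resulting contributions bounds $\|LA_1^{\top}\|_{L^2_y}$ by a polynomial in $Z$; squaring and integrating over $y \in [0,2\pi]$, and using $Z \ge 1$ to absorb the lower–order powers, gives (\ref{a1}). The term $bz$ and its $L$–derivative are handled the same way, producing at worst a beam constant times $\langle\partial_k z, z\rangle \le Z$.

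The step I expect to be the main obstacle is the bookkeeping in the $\alpha = 1$ case: for each of the many terms in $LA_1^{\top}$ one must track how many factors of $R(k)$ occur (hence how many powers of $(\Delta E)^{-1}$), how many $y$–derivatives are taken (hence how often elliptic regularity, with its $\sup|k|$, is invoked), and exactly which inner products of $z$–derivatives survive the projection inside $R(k)$, so as to land on the polynomial in $Z$ claimed in (\ref{a1}) and no larger. A secondary point is that $k = p + M(x - \tilde x)$ is complex and, as a function of $x$, unbounded; all $k$–derivative bounds are applied to the analytic continuation of $z$ and $E$ over a bounded complex neighbourhood of the relevant $k$–values, which is harmless because in the application $A_1$ always occurs multiplied by the Gaussian factor $e^{-\operatorname{Im}\Phi/\varepsilon}$, confining $x$ to a neighbourhood of $\gamma$.
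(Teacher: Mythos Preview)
Your strategy is correct and is essentially the paper's own: bound $A_1^{\top}$ by the reduced resolvent applied to $B:=i(LA_0-\langle LA_0,z\rangle z)$, then for $LA_1^{\top}$ commute $L$ past $(H-E)^{-1}$ and control the resulting lower-order terms by the derivatives of $z$ assembled in $Z$.

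The organization differs in one useful respect. Rather than differentiating the reduced resolvent $R(k)$ and tracking the eigenprojection-derivative terms you anticipate, the paper applies $(H-E)$ to $LA_1^{\top}$ and computes the commutator directly, obtaining the clean identity
\[
(H-E)\,LA_1^{\top}=LB-k_t(H_k-E_k)A_1^{\top}-k_xH_k(H_k-E_k)A_1^{\top}+iV'(y)\,\partial_xA_1^{\top}.
\]
This keeps the argument on the full space (no $\partial_kP_k$ appears) and reduces the $\alpha=1$ case to bounding $\|B\|,\|B_t\|,\|B_x\|,\|B_{xy}\|,\|B_y\|,\|B_{yy}\|$ together with $\|\partial_y^jA_1^{\top}\|$ ($j\le 2$) and $\|\partial_xA_1^{\top}\|$; the latter are recovered from the equation $(H-E)A_1^{\top}=B$ itself, which is exactly your elliptic-regularity fact~(b) carried out by hand. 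The $B$-estimates are then packaged via a short auxiliary lemma on projected functions $\tilde f=f-\langle f,z\rangle z$. The payoff is that the ``bookkeeping'' you flag as the main obstacle becomes a finite explicit list, and the specific polynomial $Z(1+Z+Z^{2})$ drops out. Your resolvent-derivative route reaches the same conclusion but with more terms to chase; if you carry it out, the commutator identity above is the shortcut you are effectively reproving.
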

\begin{proof}
Since $A_1$ is a linear combination of $A_1^\top$ and $z$, we will prove (\ref{a1}) for $A_1^\top$ only. Set
$$
B: =i(LA_0-\langle LA_0,z\rangle z),
$$
we have
\begin{equation}\label{a1b}
A_1^\top=(H-E)^{-1}B.
\end{equation}
We proceed in two steps:  \\

{\sl Step 1.}   Estimate of $LA_1^\top$ in terms of $B$. \\

A careful calculation gives that
\begin{equation}\label{LA}
(H-E)LA_1^\top=LB-k_t(H_k-E_k)A_1^\top-k_xH_k(H_k-E_k)A_1^\top + iV'(y)\partial_xA_1^\top.
\end{equation}
In fact,   applying $L$ to  (\ref{a1b})  gives
$$
(H_k-E_k)k_tA_1^\top +(H-E)\partial_t A_1^\top+H_k\partial_x[(H-E)A_1^\top]+\frac{1}{2}k_x(H-E)A_1^\top=LB.$$
Note that  $$\partial_x[(H-E)A_1]=(H_k-E_k)k_xA_1+(H-E)\partial_xA_1.$$
Using the definition of operators $H$ and $H_k$ we also have
$$
H_k(H-E)=(H-E)H_k-iV'(y).
$$
These together verifies (\ref{LA}).

From (\ref{LA}) it follows that
\begin{equation}\label{la1b}
\|LA_1^\top\|_{L^2_y}\leq \frac{C}{\Delta E} \left(\|LB\|_{L^2_y} +\sum_{j=0}^2\|\partial_y^jA_1^\top\|_{L^2_y}+\|\partial_x A_1^\top\|_{L^2_y}\right),
\end{equation}
here $C$ depends on $k_t, k_x, E_k$ and $V'(y)$, and we have used the following resolvent estimate,
$$
\displaystyle\|(H-E)^{-1}\|_{L^2}\leq\frac{1}{\Delta E},
$$
where the domain of the operator $(H-E)^{-1}$ is restricted to the orthogonal complement of the eigenvector $z.$
Next we estimate the right hand of (\ref{la1b}) in terms of $B$. From here on we use $C$ to denote a generic constant  depending on $\Delta E$,  $k, E, z$ and their derivatives.
 We note that
$$LB=B_t+H_kB_x+\frac{1}{2}k_xB=B_t-iB_{xy}+kB_x+\frac{1}{2}k_xB,
$$
which yields
$$
\|LB\|_{L^2_y}\leq C(\|B\|_{L^2_y}+\|B_t\|_{L^2_y}+\|B_x\|_{L^2_y}+\|B_{xy}\|_{L^2_y}).
$$
In the rest of this proof, we shall use $\|\cdot\|$ to denote $\|\cdot\|_{L^2_y}$.

From (\ref{a1b}) it follows that
\begin{align*}
B_y & =(H-E)A_{1y}-iV'(y)A_1, \\
B_{yy}& =(H-E)A_{1yy}-2iV'(y)A_{1y}-iV^{\prime\prime}(y)A_1,\\
B_x & =(H-E)A_{1x}+k_x(H_k-E_k)A_1.
\end{align*}
Again from (\ref{a1b}) we obtain $\|A_1^\top\| \leq C\|B\|$, which when combined with the above gives
\begin{align*}
\|A_{1y}\| &  \leq C(\|B\|+ \|B_y\|), \\
\|A_{1yy}\| & \leq C\left(\|B_{yy}\|+\|A_{1y}\|+\|A_1\|\right)\leq C \sum_{j=0}^2\|\partial_y^j B\|, \\
\|A_{1x}\| & \leq C \left(\|B_x\|+\|A_{1y}\|+\|A_1\|\right)\leq C\left(\|B\|+\|B_y\|+\|B_x\|\right).
\end{align*}
Therefore,
\begin{equation}\label{B}
\|LA_1^\top \|\leq C(\|B\|+ \|B_t\|+\|B_x\|+\|B_{xy}\|+\|B_y\|+\|B_{yy}\|).
\end{equation}

{\sl Step 2.}   Estimate of $B$. \\

Note that
\begin{align*}
B&=L(az)-\langle L(az),z\rangle z\\
& =ak_t(z_k-\langle z_k,z\rangle z)+ak_x(H_kz_k-\langle H_kz_k,z\rangle z) \\
& =ak_t \tilde f_1 +ak_x \tilde f_2,
\end{align*}
where $\tilde f_i$ are of the form
$$
\tilde{f}(k,y)=f(k,y)-\langle f(k,\cdot), z(k,\cdot)\rangle z(k,y),
$$
with $f_1=z_k$ and $f_2=H_kz_k=-iz_{ky} +k z_k$.   The right hand side of (\ref{B})  is majored by
$$
I_1+I_2: = C \sum_{i=1}^2 ( (\|\tilde f_i\|+ \|\partial_t \tilde f_i\|+\|\partial_x \tilde f_i\| +\|\partial_{xy}^2 \tilde f_i \|+\|\partial_y \tilde f_i \|+\|\partial_y^2 \tilde f_i \|).
$$
We apply Lemma \ref{f} below to bound both $I_1$ and $I_2$.
 \begin{align*}
I_1& \leq C(\|z_k\| +\|z_{kk}\|+\|z_k\|^2+(1+\|z_k\|)(\|z_{ky}\| + \|z_{kyy})\|) +\|z_{kky}\|\\
& \qquad \|z_{kk}\|\|z_y\| +\|z_{ky}\|\|z_k\| +\|z_k\|^2\|z_y\|) \\
& \leq CZ(1+Z+Z^2).
\end{align*}
Since $f_2=-iz_{ky}+kf_1$, it suffices to bound $I_2$ by considering only $f_2=z_{ky}$. By Lemma  \ref{f} we have
\begin{align*}
I_2& \leq C(\|z_{ky} \| + \|z_{kky}\|  +\|z_{ky}\|\|z_k\|\\
&\quad +\|z_{kyy}\|+\|z_{kyyy}\| +\|z_{ky}\| (1+\|z_y\|+\|z_{yy}\|)\\
&\quad +\|z_{kkyy}\| +\|z_{kky}\|\|z_y\| +\|z_{kyy}\|\|z_k\|+ \|z_{ky}\|^2 +\|z_{ky}\|\|z_k\|\|z_y\|) \\
&  \leq CZ(1+Z+Z^2).
\end{align*}
These together with (\ref{B}) yield
$$
\|LA_1^\top\|\leq CZ(1+Z+Z^2).
$$
This proves the boundedness of $\|LA_1\|.$
 \end{proof}

 \begin{lemm}\label{f}
Let $f(k,y)$ be smooth and integrable in $y$ and
\begin{equation}\label{fz}
\tilde{f}(k,y)=f(k,y)-\langle f(k,\cdot),z(k,\cdot)\rangle z(k,y).
\end{equation}
Then for $k=k(t, x)$ the following estimates hold:
\begin{enumerate}
\item $\|\tilde{f}_t, \tilde{f}_x \|\leq C(\|f_k\|+\|f\|\|z_k\|)$,\\
\item $\displaystyle\|\partial_y^j\tilde{f}\|\leq \|\partial_y^j f\|+\|f\|\|\partial_y^j z\|,\quad j=1,2,$\\
\item $\|\tilde{f}_{xy}\|\leq C(\|f_{ky}\|+\|f_k\|\|z_y\|+\|f_y\|\|z_k\|+\|f\|\|z_{ky}\|+\|f\|\|z_k\|\|z_y\|),$
\end{enumerate}
where constant $C$ depends on $k_t$ and $k_x$, and the norm $\|\cdot\|:=\|\cdot\|_{L^2_y}$.
\end{lemm}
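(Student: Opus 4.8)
The plan is to exploit the fact that $\tilde f$ is nothing but $f$ with its projection onto the one-dimensional span of the normalized eigenvector $z$ removed: setting $c(k):=\langle f(k,\cdot),z(k,\cdot)\rangle$ we have $\tilde f=f-cz$, and the essential structural point is that $c$ depends on $k$ only, not on $y$. Since $f$ and $z$ are functions of $(k,y)$ and $k=k(t,x)=p(t)+M(t)(x-\tilde x(t))$ along the ray (so $k_x=M(t)$ and $k_t=\dot p+\dot M(x-\tilde x)-ME'(p)$, both bounded uniformly in $(t,x_0)$ near the ray by the Gaussian beam system (\ref{3.23}) and global solvability of the Riccati equation for $M$ from \cite{DGR06}), every $t$- and $x$-derivative acts through the chain rule, $\partial_t=k_t\partial_k$ and $\partial_x=k_x\partial_k$. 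Hence it suffices to bound $\|\partial_k\tilde f\|$, $\|\partial_y^j\tilde f\|$ and $\|\partial_k\partial_y\tilde f\|$ in $L^2_y$ and then multiply by the bounded factors $k_t,k_x$; the constant $C$ of the statement absorbs $\sup|k_t|$ and $\sup|k_x|$.

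For item (1), differentiating $\tilde f=f-cz$ in $k$ gives $\partial_k\tilde f=f_k-c_kz-cz_k$ with $c_k=\langle f_k,z\rangle+\langle f,z_k\rangle$. Applying Cauchy--Schwarz together with $\|z\|=1$, so that $|\langle f_k,z\rangle|\le\|f_k\|$, $|\langle f,z_k\rangle|\le\|f\|\|z_k\|$ and $|c|\le\|f\|$, one gets $\|\partial_k\tilde f\|\le 2\|f_k\|+2\|f\|\|z_k\|$, and (1) follows upon multiplying by $k_t$ respectively $k_x$. For item (2), since $c$ is independent of $y$ we have the exact identity $\partial_y^j\tilde f=\partial_y^j f-c\,\partial_y^j z$, and $|c|\le\|f\|$ yields the claimed inequality with constant $1$.

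For item (3), I combine the two computations: $\partial_y\tilde f=f_y-cz_y$, and then $\partial_x\partial_y\tilde f=k_x\,\partial_k(f_y-cz_y)=k_x\bigl(f_{ky}-c_kz_y-cz_{ky}\bigr)$. Expanding $c_k$ as above and estimating each term by Cauchy--Schwarz produces a bound by $C\bigl(\|f_{ky}\|+\|f_k\|\|z_y\|+\|f\|\|z_k\|\|z_y\|+\|f\|\|z_{ky}\|\bigr)$, which is in turn dominated by the right-hand side of (3), the extra summand $\|f_y\|\|z_k\|$ there being harmless slack. The analogous identity $\partial_t\partial_y\tilde f=k_t\partial_k(f_y-cz_y)$ handles $\tilde f_{ty}$ if needed, with the same bound up to the factor $k_t$.

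The computation is entirely elementary. The only place requiring any care, and the closest thing to an obstacle, is the bookkeeping of which terms the derivative $c_k$ of the projection coefficient contributes (this is what produces the ``cross'' terms such as $\|f\|\|z_k\|$ and $\|f\|\|z_k\|\|z_y\|$), together with the verification that $k_t$ and $k_x$ are genuinely bounded uniformly in $(t,x_0)$ near the central ray, so that the chain-rule factors can legitimately be folded into $C$; this is exactly where the quadratic (first-order) form of the phase $\Phi$ in (\ref{3.11}) and the control of $M$ enter.
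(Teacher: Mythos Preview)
Your argument is correct and follows essentially the same route as the paper: differentiate $\tilde f=f-\langle f,z\rangle z$ directly, apply the chain rule in $t,x$ via $k=k(t,x)$, and bound each term by Cauchy--Schwarz together with $\|z\|=1$. The only cosmetic difference is that the paper keeps track of the fact that $k$ is complex off the ray (so $\partial_t\langle f,z\rangle=k_t\langle f_k,z\rangle+\overline{k_t}\langle f,z_k\rangle$ rather than a single $c_k$), but since $|\overline{k_t}|=|k_t|$ this does not alter the estimates, and like you the paper also does not need the extra summand $\|f_y\|\|z_k\|$ in item~(3).
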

\begin{proof}
By the chain rule,
$$\tilde{f}_t=k_tf_k-k_t\langle f_k,z\rangle z-\overline{k_t}\langle f,z_k\rangle z-k_t\langle f,z\rangle z_k.$$
Using the Cauchy inequality together with the fact that $z$ is normalized, we obtain
$$\|\tilde{f}_t\|\leq C(\|f_k\|+2\|f\|\|z_k\|).$$
Same estimate follows for $f_x$.

For differentiation in $y$ we have
$$ \partial_y^j \tilde{f}= \partial_y^j f -\langle f, z\rangle \partial^j_y z,$$
leading to
$$\|\partial_y^j \tilde{f} \|\leq\|\partial_y^j f\|+\|f\|\|\partial_y^j z\|.
$$
Finally,
$$
\tilde{f}_{xy}=k_xf_{ky}-k_x\langle f_k,z\rangle z_y-\overline{k_x}\langle f,z_k\rangle z_y-k_x \langle f,z\rangle z_{ky}.
$$
Hence
$$
\|\tilde{f}_{xy}\|\leq C (\|f_{ky}\|+\|f_k\|\|z_y\|+\|f\|\|z_k\|\|z_y\|+\|f\|\|z_{ky}\|)
$$
which concludes the proof of the lemma.
\end{proof}

\subsection*{Gaussian Beam Superposition and Residuals}
We solve ODE system (\ref{3.23}) for each band, and obtain a band based Gaussian beam approximation along a given ray:
\begin{equation}\label{3.25}
\tilde{\Psi}_{GB}^{\varepsilon n}(t,x,y;x_0)=(a_n(t;x_0)z_n(k_n,y)+\varepsilon A_1^n(t,x,y;x_0))e^{i\Phi_n(t,x;x_0)/\varepsilon}.
\end{equation}
Since the Schr\"odinger equation is linear, the approximate solution can be generated by a superposition of neighboring Gaussian beams and over all available bands
\begin{equation}
\tilde{\Psi}^\varepsilon(t,x,y) =\frac{1}{\sqrt{2\pi\varepsilon}}\int_{K_0}\sum_{n=1}^N \tilde{\Psi}_{GB}^{\varepsilon n}(t,x,y,x_0)dx_0,
\end{equation}
where $\displaystyle\frac{1}{\sqrt{2\pi\varepsilon}}$ is a normalized constant chosen to match initial data against the Gaussian profile.
Let us use the notation
\begin{equation}\label{3.24}
\tilde{\Psi}^{\varepsilon n}(t,x,y):= \frac{1}{\sqrt{2\pi\varepsilon}}\int_{K_0}\tilde{\Psi}_{GB}^{\varepsilon n}(t,x,y;x_0)dx_0,
\end{equation}
then Lemma \ref{lem3.1} yields the following residual representation:
\begin{equation}\label{3.28}
P(\tilde{\Psi}^{\varepsilon n})=\frac{1}{\sqrt{2\pi\varepsilon}}\int_{K_0} \left( c_{0n}+ \varepsilon c_{1n}+ \varepsilon^2 c_{2n} \right) e^{i\Phi_n(t,x;x_0)/\varepsilon} dx_0.
\end{equation}
In next two sections we provide proofs of the  accuracy results. We start with the initial error estimation.

\section{Initial Error - proof of Theorem \ref{th2.1}.}
{In this section,  the unmarked norm $\|\cdot\|$ denotes $\|\cdot\|_{L^2_{x,y}}$- norm unless otherwise specified.}

For simplicity of presentation, we only give the one dimensional estimate with $d=1$.
The initial phase can be expressed as
$$S_0(x)=S_0(x_0)+S_0^\prime(x_0)(x-x_0)+S_0^{\prime\prime}(x_0)\frac{(x-x_0)^2}{2}+R_2^{x_0}[S_0]=T_2^{x_0}[S_0](x)+R_2^{x_0}[S_0](x),
$$
where
$$\displaystyle R_2^{x_0}[S_0]=\frac{|S_0^{(3)}(\eta(x,x_0))|(x-x_0)^3}{3!}
$$ is the remainder of the Taylor expansion.
The idea of the proof of Theorem \ref{th2.1}  is to introduce
\begin{equation}\label{4.1}
\Psi^*=\frac{1}{\sqrt{2\pi\varepsilon}}\int_\mathbb{R}g(x_0,y)e^{iT_2^{x_0}[S_0](x)/\varepsilon}e^{-\frac{(x-x_0)^2}{2\varepsilon}}dx_0,
\end{equation}
so that
\begin{equation}\label{pp}
\|\tilde{\Psi}_0-\tilde{\Psi}_0^\varepsilon\|\leq\|\tilde{\Psi}_0-\Psi^*\|+\|\Psi^*-\tilde{\Psi}_0^\varepsilon\|,
\end{equation}
where the initial condition $ \tilde{\Psi}_0=\tilde{\Psi}(0, x, y)$ defined in (\ref{2.1}),
$\tilde{\Psi}_0^\varepsilon=\tilde{\Psi}^\varepsilon(0, x, y)$ defined in (\ref{2.8}) is the the Gaussian beam superposition evaluated at $t=0,$
\begin{equation} \label{4.3}
\tilde{\Psi}_0^{\varepsilon}=\frac{1}{\sqrt{2\pi\varepsilon}}\int_{K_0}\sum_{n=1}^N(a_n(x_0)z_n(\partial_x\Phi^0(x;x_0),y)+\varepsilon A_1^n(0,x,y;x_0))e^{i\Phi^0(x;x_0)/\varepsilon}dx_0,
\end{equation}
where  from (\ref{2.9}) we have
$$\Phi^0(x,x_0)=T_2^{x_0}[S_0](x)+\frac{i(x-x_0)^2}{2}.
$$

The rest of this section is to estimate two terms on the right of  (\ref{pp}), which will  be given  in Lemma 4.1 and Lemma 4.2 below, respectively.
\begin{lemm}\label{lem4.1}
Let $\Psi^*$ be defined in (\ref{4.1}), $g(x,y)\in H^1(K_0\times[0,2\pi]),$ then
$$\|\Psi^*-\tilde{\Psi}_0\|\leq \Big(\|\partial_xg\|+\sqrt{\frac{5}{12}}\max_{x\in \mathbb{R}} |S_0^{(3)}(x)|\|g\|\Big)\varepsilon^{1/2}.$$
\end{lemm}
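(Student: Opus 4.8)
The plan is to estimate $\|\Psi^* - \tilde\Psi_0\|$ by writing the difference as a single integral over $x_0$ against a Gaussian-type kernel and exploiting the fact that the kernel $\frac{1}{\sqrt{2\pi\varepsilon}}e^{-(x-x_0)^2/(2\varepsilon)}$ is an approximate identity (its mass is $1$, and it concentrates on a scale $\sqrt{\varepsilon}$ around $x_0$). First I would recall that $\tilde\Psi_0(x,y) = g(x,y)e^{iS_0(x)/\varepsilon}$, and rewrite it using the exact Gaussian-integral identity
$$
g(x,y)e^{iS_0(x)/\varepsilon} = \frac{1}{\sqrt{2\pi\varepsilon}}\int_{\mathbb{R}} g(x,y)e^{iS_0(x)/\varepsilon}e^{-(x-x_0)^2/(2\varepsilon)}\,dx_0,
$$
so that
$$
\Psi^*(x,y) - \tilde\Psi_0(x,y) = \frac{1}{\sqrt{2\pi\varepsilon}}\int_{\mathbb{R}}\Big(g(x_0,y)e^{iT_2^{x_0}[S_0](x)/\varepsilon} - g(x,y)e^{iS_0(x)/\varepsilon}\Big)e^{-(x-x_0)^2/(2\varepsilon)}\,dx_0.
$$

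Next I would split the bracketed integrand into two pieces via a triangle inequality inside the integral: an \emph{amplitude} error $\big(g(x_0,y)-g(x,y)\big)e^{iT_2^{x_0}[S_0](x)/\varepsilon}$ and a \emph{phase} error $g(x,y)\big(e^{iT_2^{x_0}[S_0](x)/\varepsilon} - e^{iS_0(x)/\varepsilon}\big)$. For the amplitude error, I would use $|g(x_0,y)-g(x,y)| \le \int_0^1 |\partial_x g(x + s(x_0-x), y)|\,|x-x_0|\,ds$; for the phase error, since $|e^{ia}-e^{ib}|\le |a-b|$, the factor is bounded by $\frac{1}{\varepsilon}|S_0(x) - T_2^{x_0}[S_0](x)| = \frac{1}{\varepsilon}|R_2^{x_0}[S_0](x)| \le \frac{1}{6\varepsilon}\max|S_0^{(3)}|\,|x-x_0|^3$. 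In both cases the weight $|x-x_0|^k e^{-(x-x_0)^2/(2\varepsilon)}$ appears; the cube in the phase term is exactly compensated by the $1/\varepsilon$, leaving an effective weight of order $\varepsilon^{1/2}$ times a Gaussian.

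Then I would take the $L^2_{x,y}$ norm, apply Minkowski's integral inequality to pull the $x_0$-integral outside the norm, and change variables $x\mapsto x + s(x_0-x)$ (amplitude term) or translate in $x$ (phase term) to decouple the Gaussian kernel from the $g$-norm. This reduces the amplitude contribution to $\|\partial_x g\|_{L^2_{x,y}}$ times $\frac{1}{\sqrt{2\pi\varepsilon}}\int |x-x_0| e^{-(x-x_0)^2/(2\varepsilon)}\,dx_0$ — evaluating this Gaussian moment gives a constant times $\varepsilon^{1/2}$ — and the phase contribution to $\max|S_0^{(3)}|\,\|g\|_{L^2_{x,y}}$ times $\frac{1}{6\varepsilon}\cdot\frac{1}{\sqrt{2\pi\varepsilon}}\int |x-x_0|^3 e^{-(x-x_0)^2/(2\varepsilon)}\,dx_0$, whose Gaussian moment scales like $\varepsilon^{3/2}$, again yielding $\varepsilon^{1/2}$ overall. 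Tracking the explicit Gaussian constants ($\int |u| e^{-u^2/(2\varepsilon)}du = \varepsilon\sqrt{2/\pi}\cdot\sqrt{2\pi\varepsilon}$-type evaluations) should produce the stated coefficients $1$ and $\sqrt{5/12}$. The main obstacle I anticipate is purely bookkeeping: getting the numerical constant $\sqrt{5/12}$ exactly right, which requires carefully combining the $1/6$ from Taylor's remainder, the normalization $1/\sqrt{2\pi\varepsilon}$, and possibly a split of $|x-x_0|^3$ handling (e.g. using $2ab \le a^2+b^2$ to separate the cube interacting with the square-root weight) rather than any genuine analytic difficulty — the structural estimate is routine once Minkowski and the translation invariance of the Gaussian are in place.
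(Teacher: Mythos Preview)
Your plan is essentially the paper's proof: the same splitting into an amplitude piece $I$ (from $g(x_0,y)-g(x,y)$) and a phase piece $J$ (from $e^{iT_2^{x_0}[S_0]/\varepsilon}-e^{iS_0/\varepsilon}$), the same Taylor-remainder bound $|R_2^{x_0}[S_0]|\le \tfrac{1}{6}\max|S_0^{(3)}|\,|x-x_0|^3$, and the same use of Gaussian moments to extract the factor $\varepsilon^{1/2}$.

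The one technical difference is how you pass to the $L^2_{x,y}$ norm. The paper squares first and applies Cauchy--Schwarz in the $x_0$-variable (splitting the Gaussian weight as $e^{-(x-x_0)^2/(2\varepsilon)}\cdot e^{-(x-x_0)^2/(2\varepsilon)}$), then evaluates $\int \xi^{2}e^{-\xi^2}d\xi$ and $\int \xi^{6}e^{-\xi^2}d\xi$; this is precisely what produces the constants $1$ and $\sqrt{5/12}$. Your Minkowski route is equally valid but does \emph{not} reproduce those particular constants: pulling the $x_0$-integral outside and computing $\frac{1}{\sqrt{2\pi\varepsilon}}\int|u|\,e^{-u^2/(2\varepsilon)}du=\sqrt{2\varepsilon/\pi}$ and $\frac{1}{\sqrt{2\pi\varepsilon}}\int|u|^3 e^{-u^2/(2\varepsilon)}du=4\varepsilon^{2}/\sqrt{2\pi\varepsilon}$ gives the smaller coefficients $\sqrt{2/\pi}$ and $\tfrac{2}{3\sqrt{2\pi}}$. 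So your proof would actually yield a sharper inequality than stated, and the lemma follows a fortiori; just drop the expectation that the bookkeeping will land on exactly $1$ and $\sqrt{5/12}$.
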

\begin{proof}
Using that $$\frac{1}{\sqrt{2\pi\varepsilon}}\int_\mathbb{R}e^{-\frac{(x-x_0)^2}{2\varepsilon}}dx_0=1,$$
\begin{equation}\Psi^*-\tilde{\Psi}_0=\frac{1}{\sqrt{2\pi\varepsilon}}\int_\mathbb{R}[g(x_0,y)e^{iT_2^{x_0}[S_0](x)/\varepsilon}-g(x,y)e^{iS_0(x)/\varepsilon}]e^{-\frac{(x-x_0)^2}{2\varepsilon}}dx_0=I+J,
\end{equation}
where
$$I=\frac{1}{\sqrt{2\pi\varepsilon}}\int_\mathbb{R}(g(x_0,y)-g(x,y))e^{iT_2^{x_0}[S_0](x)/\varepsilon}e^{-\frac{(x-x_0)^2}{2\varepsilon}}dx_0,$$
$$J=\frac{1}{\sqrt{2\pi\varepsilon}}\int_\mathbb{R}g(x,y)(e^{iT_2^{x_0}[S_0](x)/\varepsilon}-e^{iS_0(x)/\varepsilon})e^{-\frac{(x-x_0)^2}{2\varepsilon}}dx_0.$$
Our next step is to find estimates for $\|I\|$ and $\|J\|.$
\begin{eqnarray*}
\|I\|^2&=&\frac{1}{2\pi\varepsilon}\Big\|\int_\mathbb{R}(g(x_0,y)-g(x,y))|e^{iT_2^{x_0}[S_0](x)/\varepsilon}|e^{-\frac{(x-x_0)^2}{2\varepsilon}}dx_0\Big\|\\
&=&\frac{1}{2\pi\varepsilon}\int_0^{2\pi}\int_\mathbb{R}\Big[\int_\mathbb{R}(g(x_0,y)-g(x,y))e^{-\frac{(x-x_0)^2}{2\varepsilon}}dx_0\Big]^2dxdy.
\end{eqnarray*}
For fixed $x,$ we introduce a new variable $\displaystyle\xi=\frac{x-x_0}{\sqrt{2\varepsilon}},\quad dx_0=-\sqrt{2\varepsilon}d\xi$\quad to obtain
$$\|I\|^2=\frac{1}{\pi}\int_0^{2\pi}\int_\mathbb{R}\Big(\int_\mathbb{R}|g(x-\sqrt{2\varepsilon}\xi,y)-g(x,y)|e^{-\xi^2}d\xi\Big)^2dxdy.$$
By the H\"older inequality,
\begin{eqnarray*}
\|I\|^2&\leq&\frac{1}{\pi}\int_0^{2\pi}\int_\mathbb{R}\int_\mathbb{R}|g(x-\sqrt{2\varepsilon}\xi,y)-g(x,y)|^2e^{-\xi^2}d\xi\int_{R}e^{-\xi^2}d\xi dxdy\\
&=&\frac{1}{\sqrt{\pi}}\int_0^{2\pi}\int_{\mathbb{R}}\int_{\mathbb{R}}|g(x-\sqrt{2\varepsilon}\xi,y)-g(x,y)|^2e^{-\xi^2}d\xi dxdy.
\end{eqnarray*}
Using the mean value theorem for $g,$ we have $$g(x-\sqrt{2\varepsilon}\xi, y)-g(x, y)=- \partial_x g(x-\eta^*\sqrt{2\varepsilon}\xi, y)\sqrt{2\varepsilon}\xi=- \partial_x
g(x-\eta^*(x-x_0), y)\sqrt{2\varepsilon}\xi.$$
\\Hence,
\begin{eqnarray*} \|I\|^2&\leq&\frac{2\varepsilon}{\sqrt{\pi}}\int_0^{2\pi}\int_\mathbb{R}\int_\mathbb{R}|\partial_xg(x-\eta^*(x-x_0),y)|^2dx\xi^2e^{-\xi^2}d\xi dy\\
&=&\frac{2\varepsilon}{\sqrt{\pi}}\|\partial_xg\|^2\int_{\mathbb{R}}\xi^2e^{-\xi^2}d\xi =\varepsilon\|\partial_xg\|^2.
\end{eqnarray*}
Now we turn to the estimation of $\displaystyle\|J\|:$
\begin{eqnarray*}
\|J\|^2&=&\frac{1}{2\pi\varepsilon}\Big\|\int_\mathbb{R}g(x,y)(e^{iT_2^{x_0}[S_0](x)/\varepsilon}-e^{iS_0(x)/\varepsilon})e^{-\frac{(x-x_0)^2}{2\varepsilon}}dx_0\Big\|^2\\
&\leq&\frac{1}{2\pi\varepsilon}\int_0^{2\pi}\int_\mathbb{R}\Big[\int_\mathbb{R}|g(x,y)||e^{iS_0(x)/\varepsilon}||e^{-iR_2^{x_0}[S_0](x)/\varepsilon}-1|e^{-\frac{(x-x_0)^2}{2\varepsilon}}dx_0\Big]^2dxdy.
\end{eqnarray*}
Since $S_0$ is real, $|e^{iS_0(x)/\varepsilon}|=1.$ The above is further  bounded by
$$
\frac{1}{2\pi\varepsilon}\int_0^{2\pi}\int_\mathbb{R}\Big[\int_\mathbb{R}|g(x,y)|\Big[\Big(\cos\frac{R_2^{x_0}[S_0](x)}{\varepsilon}-1\Big)^2+\sin^2\frac{R_2^{x_0}[S_0](x)}{\varepsilon}\Big]^{1/2}e^{-\frac{(x-x_0)^2}{2\varepsilon}}dx_0\Big]^2dxdy.$$
Using a half-angle formula for $\sin x$ and that $|\sin x|\leq|x|,$ we obtain:
\begin{eqnarray*}
\|J\|^2&\leq&\frac{1}{2\pi\varepsilon}\int_0^{2\pi}\int_\mathbb{R}\Big[\int_\mathbb{R}|g(x,y)|\Big(4\sin^2\frac{R_2^{x_0}[S_0](x)}{2\varepsilon}\Big)^{1/2}e^{-\frac{(x-x_0)^2}{2\varepsilon}}dx_0\Big]^2dxdy\\
&\leq&\frac{1}{2\pi\varepsilon}\int_0^{2\pi}\int_\mathbb{R}\Big[\int_\mathbb{R}|g(x,y)|\frac{R_2^{x_0}[S_0](x)}{\varepsilon}e^{-\frac{(x-x_0)^2}{2\varepsilon}}dx_0\Big]^2dxdy.
\end{eqnarray*}
Using the remainder formula and the H\"older inequality,
$$\|J\|^2\leq\frac{1}{2\pi\varepsilon}\int_0^{2\pi}\int_\mathbb{R}\int_\mathbb{R}|g(x,y)|^2e^{-\frac{(x-x_0)^2}{2\varepsilon}}dx_0\int_\mathbb{R}\frac{(|S_0^{(3)}(\eta)|)^2}{36}\frac{|x-x_0|^6}{\varepsilon^2}e^{-\frac{(x-x_0)^2}{2\varepsilon}}dx_0dxdy.$$
Now, applying the same change of variable as for the term $I,$   $\displaystyle\xi=\frac{x-x_0}{\sqrt{2\varepsilon}},$ ($x$ variable is fixed) we get:
\begin{eqnarray*}
\|J\|^2&\leq&\frac{1}{2\pi\varepsilon}\|g\|^2\frac{(\max |S_0^{(3)}(x)|)^2}{36}\sqrt{2\pi\varepsilon}\int_\mathbb{R}8\sqrt{2} \varepsilon^{-2+3+1/2}|\xi|^6e^{-\xi^2}d\xi\\ &\leq&\frac{\sqrt{2\pi}\max_{x\in \mathbb{R}} |S_0^{(3)}(x)|^2 }{72\pi} \times 8\sqrt{2}\times \frac{15}{8}\sqrt{\pi}\|g\|^2 \varepsilon \\
&=&\frac{5}{12}\max_{x\in \mathbb{R}} |S_0^{(3)}(x)|^2\|g\|^2\varepsilon.
\end{eqnarray*}
Hence, summing both parts, we conclude that:
\begin{eqnarray*}
\|\Psi^*-\tilde{\Psi}_0\|&\leq&\|I\|+\|J\|\\
&\leq&\Big(\|\partial_xg\|+\sqrt{\frac{5}{12}}\max_{x\in\mathbb{R}} |S_0^{(3)}(x)|\|g\|\Big)\varepsilon^{1/2}.
\end{eqnarray*}
\end{proof}
Our next step is to find an estimate for the difference between GB ansatz and $\Psi^*.$
\begin{lemm}\label{4.2}
The following estimate holds:
$$\|\tilde{\Psi}_0^{\varepsilon}-\Psi^*\|\leq C\varepsilon^{1/2},$$
where
$$C=2\pi\max_{k,1\leq n\leq N} \| \partial_kz_n(k, y)\|_{L^\infty_{y}}^2\int_{K_0}\Big|\sum_{n=1}^Na_n(x_0)\Big|^2(S_0^{\prime\prime2}(x_0)+1)dx_0,$$
can be computed from the initial data.
\end{lemm}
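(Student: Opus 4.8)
The plan is to reduce $\tilde{\Psi}_0^{\varepsilon}-\Psi^*$ to two elementary Gaussian-superposition estimates. First note that since $T_2^{x_0}[S_0](x)$ is real and $\mathrm{Im}\,\Phi^0(x;x_0)=\tfrac12(x-x_0)^2$, one has $e^{i\Phi^0(x;x_0)/\varepsilon}=e^{iT_2^{x_0}[S_0](x)/\varepsilon}e^{-(x-x_0)^2/2\varepsilon}$, so that $\Psi^*=\frac{1}{\sqrt{2\pi\varepsilon}}\int_{\mathbb R}g(x_0,y)e^{i\Phi^0(x;x_0)/\varepsilon}dx_0$ and, using $g(x_0,y)=\sum_{n=1}^N a_n(x_0)z_n(S_0'(x_0),y)$ together with (\ref{4.3}), one gets
$$
\tilde{\Psi}_0^{\varepsilon}-\Psi^*=\frac{1}{\sqrt{2\pi\varepsilon}}\int_{K_0}\Big(R(x,x_0,y)+\varepsilon\sum_{n=1}^N A_1^n(0,x,y;x_0)\Big)e^{i\Phi^0(x;x_0)/\varepsilon}\,dx_0,
$$
where $R(x,x_0,y):=\sum_{n=1}^N a_n(x_0)\big[z_n(\partial_x\Phi^0(x;x_0),y)-z_n(S_0'(x_0),y)\big]$. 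It then suffices to bound the $R$-term by $O(\varepsilon^{1/2})$ and the $\varepsilon A_1$-term by $O(\varepsilon)$.

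For the $R$-term I would Taylor-expand $z_n$ in its momentum slot. Since $\partial_x\Phi^0(x;x_0)=S_0'(x_0)+(S_0''(x_0)+i)(x-x_0)$, the analytic continuation of $z_n(k,\cdot)$ to a complex neighborhood of the real momenta (legitimate under (\ref{1.5})) and the fundamental theorem of calculus along $k(s)=S_0'(x_0)+s(S_0''(x_0)+i)(x-x_0)$ give $z_n(\partial_x\Phi^0,y)-z_n(S_0'(x_0),y)=(S_0''(x_0)+i)(x-x_0)\int_0^1\partial_k z_n(k(s),y)\,ds$, hence the pointwise bound $|R(x,x_0,y)|\le\sqrt{S_0''(x_0)^2+1}\,|x-x_0|\sum_{n}|a_n(x_0)|\sup_s\|\partial_k z_n(k(s),\cdot)\|_{L^\infty_y}$. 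Taking $L^\infty_y$-norms of the $\partial_k z_n$ factors (bounded by $\max_{k,n}\|\partial_k z_n(k,\cdot)\|_{L^\infty_y}$), integrating the resulting ($y$-independent) bound over $y\in[0,2\pi]$ (a factor $2\pi$), applying Cauchy–Schwarz in $x_0$ after splitting $e^{-(x-x_0)^2/2\varepsilon}$ into two equal halves, and finally swapping the $x$- and $x_0$-integrations by Fubini, one is reduced to the scalar moments $\int_{\mathbb R}u^2 e^{-u^2/2\varepsilon}du=\sqrt{2\pi}\,\varepsilon^{3/2}$ and $\int_{\mathbb R}e^{-u^2/2\varepsilon}du=\sqrt{2\pi\varepsilon}$; collecting the powers of $\varepsilon$ yields
$$
\Big\|\tfrac{1}{\sqrt{2\pi\varepsilon}}\int_{K_0}R\,e^{i\Phi^0/\varepsilon}\,dx_0\Big\|_{L^2_{x,y}}^2\le 2\pi\,\varepsilon\,\max_{k,\,1\le n\le N}\|\partial_k z_n(k,\cdot)\|_{L^\infty_y}^2\int_{K_0}\Big(\sum_{n}|a_n(x_0)|\Big)^2\big(S_0''(x_0)^2+1\big)\,dx_0,
$$
which, after taking square roots, is the asserted $O(\varepsilon^{1/2})$ bound with a constant of the form displayed in the statement.

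The $\varepsilon A_1$-term is dealt with the same way but more cheaply, since it already carries a factor $\varepsilon$: bounding $\|A_1^n(0,x,\cdot;x_0)\|_{L^2_y}$ by Lemma \ref{lem3.2} (any at-most-polynomial growth in $x-x_0$ would still suffice, being integrated against the Gaussian), applying Cauchy–Schwarz in $x_0$ against $e^{-(x-x_0)^2/2\varepsilon}$, and using Fubini, one gets an $L^2_{x,y}$-bound of order $\varepsilon$, which is absorbed into $O(\varepsilon^{1/2})$. Adding the two contributions proves the lemma. I expect the main obstacle to be the first step — estimating the Bloch-amplitude mismatch $z_n(\partial_x\Phi^0,y)-z_n(S_0'(x_0),y)$ uniformly. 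This needs the eigenfunctions $z_n(k,\cdot)$ to extend analytically in $k$ to a complex strip around the real axis, with $\partial_k z_n$ bounded there in $L^\infty_y$; this is exactly where the spectral-gap assumption (\ref{1.5}), and the resulting real-analyticity hence analytic continuation of $z_n$ and $E_n$, is used. One must also observe that the Gaussian weight $e^{-(x-x_0)^2/2\varepsilon}$ confines the effective range of $x-x_0$ to an $O(\sqrt\varepsilon)$ neighborhood, so that $k(s)$ stays within the strip and the complementary region contributes only $O(\varepsilon^\infty)$. Once this is in place, the remainder is routine Gaussian bookkeeping, the only nuisance being to keep the finite sum over the $N$ bands and the product structure of the $L^2_{x,y}$-norm organized.
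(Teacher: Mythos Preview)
Your proposal is correct and follows essentially the same route as the paper: both split $\tilde\Psi_0^\varepsilon-\Psi^*$ into the Bloch-amplitude mismatch term and the $\varepsilon A_1$ term, linearize the former in $k$ via the mean value theorem (the paper writes $z_n(\partial_x\Phi^0,y)-z_n(S_0'(x_0),y)=\partial_k z_n(\eta_n,y)(S_0''(x_0)+i)(x-x_0)$ directly, you use the equivalent integral form), then apply Cauchy--Schwarz in $x_0$ against the Gaussian weight and reduce to standard moments. Your explicit discussion of the analytic continuation of $z_n(k,\cdot)$ to complex $k$ near the real axis makes rigorous a point the paper passes over, and your bound with $\big(\sum_n|a_n|\big)^2$ is in fact the correct form of the constant (the paper's $\big|\sum_n a_n\big|^2$ is a minor slip in the displayed constant, harmless for the rate).
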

\begin{proof} According to our construction,
\begin{align*}
\|\tilde{\Psi}_0^{\varepsilon}-\Psi^*\|^2&=\Big\|\tilde{\Psi}_0^{\varepsilon}-\frac{1}{\sqrt{2\pi\varepsilon}}\int_\mathbb{R}g(x_0,y)e^{iT_2^{x_0}[S_0](x)/\varepsilon}e^{-\frac{(x-x_0)^2}{2\varepsilon}}dx_0\Big\|^2\\
&=\frac{1}{2\pi\varepsilon}\int_0^{2\pi}\int_\mathbb{R}\Big|\int_{K_0}\sum_{n=1}^N(a_n(x_0)(z_n(\partial_x\Phi^0(x,x_0),y)-z_n(\partial_xS_0(x_0),y))\\
&+\varepsilon A_1^n(0,x,y;x_0))e^{\frac{i\Phi^0(x,x_0)}{\varepsilon}}dx_0\Big|^2dxdy.\\
\end{align*}
Then, putting the absolute value sign inside the integral over $K_0$, we observe that
\begin{align*}
\|\tilde{\Psi}_0^{\varepsilon}-\Psi^*\|^2&\leq\frac{1}{2\pi\varepsilon}\int_0^{2\pi}\int_\mathbb{R}\Big[\int_{K_0}\Big|\sum_{n=1}^N(a_n(x_0)(z_n(\partial_x\Phi^0(x,x_0),y)-z_n(\partial_xS_0(x_0),y))\\
&+\varepsilon A_1^n(0,x,y;x_0))\Big| e^{-\frac{(x-x_0)^2}{2\varepsilon}}dx_0\Big]^2dxdy.
\end{align*}
By the H\"older inequality,
\begin{align*}
\|\tilde{\Psi}_0^{\varepsilon}-\Psi^*\|^2&\leq\frac{1}{2\pi\varepsilon}\int_0^{2\pi}\int_\mathbb{R}\int_{K_0}\Big|\sum_{n=1}^N(a_n(x_0)(z_n(\partial_x\Phi^0(x,x_0),y)-z_n(\partial_xS_0(x_0),y))\\
&+\varepsilon A_1^n(0,x,y;x_0))\Big|^2e^{-\frac{(x-x_0)^2}{2\varepsilon}}dx_0\int_{K_0}e^{-\frac{(x-x_0)^2}{2\varepsilon}}dx_0dxdy\\
&=\frac{1}{\sqrt{2\pi\varepsilon}}\int_0^{2\pi}\int_\mathbb{R}\int_{K_0}\Big|\sum_{n=1}^N(a_n(x_0)(z_n(\partial_x\Phi^0(x,x_0),y)-z_n(\partial_xS_0(x_0),y))\\
&+\varepsilon A_1^n(0,x,y;x_0))\Big|^2e^{-\frac{(x-x_0)^2}{2\varepsilon}}dx_0dxdy.
\end{align*}
Using that
\begin{align*}
z_n(\partial_x\Phi^0(x,x_0),y)-z_n(\partial_xS_0(x_0),y)=&z_n(S_0^\prime(x_0)+(S_0^{\prime\prime}(x_0)+i)(x-x_0),y)-z_n(S_0^\prime(x_0),y)\\
=&\partial_kz_n(\eta_n(x,x_0),y)(S_0^{\prime\prime}(x_0)+i)(x-x_0),
\end{align*}
we obtain:
\begin{align*}
\|\tilde{\Psi}_0^{\varepsilon}-\Psi^*\|^2&\leq\frac{1}{\sqrt{2\pi\varepsilon}}\int_0^{2\pi}\int_\mathbb{R}\int_{K_0}\Big|\sum_{n=1}^N(a_n(x_0) \partial_kz_n(\eta_n(x,x_0),y)(S_0^{\prime\prime}(x_0)+i)(x-x_0)\\
&+\varepsilon A_1^n(0,x,y;x_0))\Big|^2e^{-\frac{(x-x_0)^2}{2\varepsilon}}dx_0dxdy\\
&\leq\frac{2}{\sqrt{2\pi\varepsilon}}\Big(\int_0^{2\pi}\int_\mathbb{R}\int_{K_0}\Big|\sum_{n=1}^N(a_n(x_0) \partial_kz_n(\eta_n(x,x_0),y)(S_0^{\prime\prime}(x_0)+i)(x-x_0)\Big|^2\\
&\cdot e^{-\frac{(x-x_0)^2}{2\varepsilon}}dx_0dxdy+\varepsilon^2 \int_0^{2\pi}\int_\mathbb{R}\int_{K_0}\Big|\sum_{n=1}^NA_1^n(0,x,y;x_0))\Big|^2e^{-\frac{(x-x_0)^2}{2\varepsilon}}dx_0dxdy\Big)\\
&=I_1+I_2.
\end{align*}
Switching the order of integration and applying the change of variable for fixed $x_0,\quad$ $$\displaystyle\xi=\frac{x-x_0}{\sqrt{2\varepsilon}},\quad dx=\sqrt{2\varepsilon}d\xi$$  together with the fact that $S_0^{\prime\prime}(x_0)$ is real,
\begin{eqnarray*}
I_1&\leq&\frac{1}{\sqrt{2\pi\varepsilon}}\int_0^{2\pi}\int_{K_0}\int_\mathbb{R}\Big|\sum_{n=1}^Na_n(x_0) \partial_kz_n(\eta_n(\xi,x_0),y)(S_0^{\prime\prime}(x_0)+i)\Big|^22\varepsilon\xi^2e^{-\xi^2}\sqrt{2\varepsilon}d\xi dx_0dy\\
&\leq& \frac{2\varepsilon}{\sqrt{\pi}}\max_{k,1\leq n\leq N} \|\partial_kz_n(k,y)\|_{L^\infty_{y}}^2\int_\mathbb{R}\xi^2e^{-\xi^2}d\xi\int_0^{2\pi}\int_{K_0}\Big|\sum_{n=1}^Na_n(x_0)\Big|^2(S_0^{\prime\prime2}(x_0)+1)dx_0 dy\\
&\leq&2\pi\varepsilon\max_{k,1\leq n\leq N} \| \partial_kz_n(k,y)\|_{L^\infty_{y}}^2\int_{K_0}\Big|\sum_{n=1}^Na_n(x_0)\Big|^2(S_0^{\prime\prime2}(x_0)+1)dx_0.
\end{eqnarray*}
Since $N<\infty$ is finite, the right hand side is bounded by $C\varepsilon$ where constant $C$ depends on the initial data.

As for $I_2,$
\begin{equation*}
I_2=\frac{\varepsilon^{\frac{3}{2}}}{\sqrt{2\pi}}\int_0^{2\pi}\int_\mathbb{R}\int_{K_0}\Big|\sum_{n=1}^NA_1^n(0,x,y;x_0))\Big|^2e^{-\frac{(x-x_0)^2}{2\varepsilon}}dx_0dxdy,
\end{equation*}
we use the definition of $A_1$ in (\ref{A_1}), where we use that $(H(k,y)-E(k))^{-1}$ is bounded operator (moreover, it is compact), hence $A_1$ is bounded. Also, the same change of variable as in the case of $I_1$ estimate will produce the additional rate of convergence.

Hence,
$$I_2\leq C\varepsilon^2$$ and may be neglected since its order of convergence is higher than for $I_1.$
\end{proof}
Using the triangle inequality, we thus get the estimate for the initial error:
$$\|\tilde{\Psi}_0-\tilde{\Psi}_0^\varepsilon\|\leq\|\tilde{\Psi}_0-\Psi^*\|+\|\Psi^*-\tilde{\Psi}_0^{\varepsilon}\|\leq C\varepsilon^{1/2}.$$
\section{Evolution Error - proof of Theorem \ref{Th2.2}}
We prove Theorem \ref{Th2.2} in several steps, in one dimensional setting; an extension to multi-dimensions will be given in next section.
Taking advantage of the band structure of the asymptotic construction and the linearity of the Schr\"odinger operator, we rewrite
$$P(\tilde{\Psi}^\varepsilon)=P\Big(\sum_{n=1}^N\tilde{\Psi}^{\varepsilon n}\Big)=\sum_{n=1}^NP(\tilde{\Psi}^{\varepsilon n}),$$
where
$\tilde{\Psi}^{\varepsilon n}$ is defined in (\ref{3.24}).
By the Minkowski inequality,
$$\quad\|P(\tilde{\Psi}^\varepsilon)\|\leq\sum_{n=1}^N\|P(\tilde{\Psi}^{\varepsilon n})\|.$$
Using residual representation of $P(\tilde{\Psi}^{\varepsilon, n} )$ from  (\ref{3.28}) in  section 3,  we have
 $$P(\tilde{\Psi}^{\varepsilon n})=\sum_{j=0}^2I_{jn},$$ where
\begin{equation}\label{5.1}
I_{jn}=\frac{\varepsilon^{j-\frac{1}{2}}}{(2\pi)^{\frac{1}{2}}}\int_{K_0}G_{jn}(t,x;x_0,y)(x-\tilde{x}_n(t;x_0))^{(3-2j)_+}e^{i\Phi_n(t,x;x_0)/\varepsilon}dx_0,
\end{equation}
where
\begin{align}
G_{0n}(t,x;x_0,y)&=\frac{1}{3!}a_n(t;x_0)\partial_x^3F_n(t,x^*)z_n(k_n,y),\\
G_{1n}(t,x;x_0)&=(ia\langle\partial_kz_n,z_n\rangle\dot{M}-\frac{1}{3!}\partial_x^3F_n(t,x^*)A_{1n}(x-\tilde{x}_n)^2).\\
G_{2n}(t,x;x_0,y)&=a_n(t;x_0)M_n^2\partial_k^2z_n(k_n,y)+iLA_{1n}.
\end{align}
Let $^\prime$ denote quantities defined on the ray emanating from $x_0^\prime$ such as $\tilde{x}_n^\prime, c_{jn}^\prime$ and $\Phi_n^\prime$.

{From Lemma \ref{lem3.1} and Lemma \ref{lem3.2} it follows the following bound:
\begin{equation}\label{gjn}
\int_0^{2\pi}|G_{jn}\overline{G_{jn}^\prime}|dy \leq  C_1.
\end{equation}
Here we note that $G_{1n}$ contains a term involving $(x-\tilde{x}_n)^2$ which becomes unbounded when $x$ is far away from the ray $\tilde{x}_n.$ In such case, the Gaussian beam factor $e^{-\delta|x-\tilde{x}_n|^2/\varepsilon}$ needs to be taken into account.}

We  compute the $L^2$ norm of $I_{jn} $ by
 \begin{align*}
\|I_{jn}\|^2& = \int_0^{2\pi} \int_{\mathbb{R}}  I_{jn}(t,x;x_0,y) \cdot \overline {I_{jn}(t,x;x_0',y)} dxdy  \\
& = \int_0^{2\pi}\int_{\mathbb{R}}\int_{K_0}\int_{K_0}J_{jn}(x, y, x_0, x_0')  dx_0dx_0^\prime dxdy,
\end{align*}
where
\begin{equation}\label{5.3}
J_{jn}(x, y, x_0, x_0')= \frac{\varepsilon^{2j-1}}{2\pi} G_{jn}\overline{G_{jn}^\prime}(x-\tilde{x}_n)^{(3-2j)_+}(x-\tilde{x}_n^\prime)^{(3-2j)_+}e^{i\psi_n/\varepsilon}
\end{equation}
with
\begin{equation}
\psi_n(t, x, x_0, x_0^\prime)=\Phi_n(t,x;x_0)-\overline{\Phi_n^\prime(t,x;x_0^\prime)}.
\end{equation}

Let $\rho_j(x,x_0,x_0')\in C^{\infty}$ be a partition of unity such that
\begin{equation}
\rho_2=\begin{cases}1,\quad |x-\tilde{x}_n|\leq \eta\cap|x-\tilde{x}_n^\prime|\leq \eta,\\
0,\quad|x-\tilde{x}_n|\geq 2\eta\cup|x-\tilde{x}_n^\prime|\geq 2\eta,
\end{cases}
\end{equation}
and $\rho_1+\rho_2 = 1$. Moreover, let
$$
J_{jn}^1=\rho_1J_{jn}(x,y, x_0, x_0'),\qquad
J_{jn}^2=\rho_2J_{jn}(x, y, x_0, x_0') ,
$$
so that $J_{jn}(x,y, x_0, x_0')=J_{jn}^1+J_{jn}^2$.

The rest of this section is to establish the
following
\begin{equation}\label{Izz}
\left| \int_0^{2\pi}\int_{\mathbb{R}} \int_{K_0}\int_{K_0} J_{jn}^i dx_0d{x_0'} dxdy\right| \leq C\varepsilon^{3}
\end{equation}
for $i=1,2$. With this estimate  we  have
$
\|I_{jn}\| \leq C\varepsilon^{\frac{3}{2}}, $
leading to the desired estimate.
{Since for $j=2$ we already have the needed convergence rate, the following proof will be concerned with $j=0$ or $j=1$ cases.}
\subsubsection{Estimate of $J_{jn}^1$}
Using that $\Im\psi_n=\Im\Phi_n+\Im\Phi_n'\geq\delta(|x-\tilde{x}_n|^2+|x-\tilde{x}_n'|^2)$ 
and the definition of $\rho_1$,  in $J_{jn}^1$ either $ |x-\tilde{x}_n(t;x_0)|\quad\mbox{or}\quad|x-\tilde{x}_n(t;x_0^\prime)|$ is greater than $2\eta,$ hence
$$
\int_0^{2\pi}|J^1_{jn}| dy\leq C e^{-\frac{\delta}{2\varepsilon}|x-\tilde{x}_n|^2}e^{-\frac{2\eta^2\delta}{\varepsilon}},
$$
we thus obtain an exponential decay
\begin{equation*}
\left| \int_0^{2\pi}\int_{\mathbb{R}} \int_{K_0}\int_{K_0} J_{jn}^1 dx_0d{x_0'} dxdy\right| \leq C \Big(\frac{2\pi\varepsilon}{\delta}\Big)^{\frac{1}{2}}|K_0|^2e^{-\frac{2\eta^2\delta}{\varepsilon}}\leq C\varepsilon^s\quad\forall s.
\end{equation*}
\subsubsection{Estimation of $J_{jn}^2$}
Using the estimate
$$\displaystyle s^pe^{-as^2}\leq\Big(\frac{p}{e}\Big)^{p/2}a^{-p/2}e^{-as^2/2},
$$
with  $ s=|x-\tilde{x}_n|\quad \mbox{or}\quad s=|x-\tilde{x}^\prime_n|, \; p=3,1, \mbox{or}\; 0, \; a=\frac{\delta}{2\varepsilon}$,  we have
\begin{align*}
\int_0^{2\pi}|J^i_{jn}| dy & \leq CC_2 \varepsilon^2e^{-\frac{\delta}{2\varepsilon}(|x-\tilde{x}_n|^2+|x-\tilde{x}^\prime_n|^2)},
\end{align*}
where
$$
C_2\leq\frac{1}{2\pi} \left(\frac{6}{e\delta}\right)^{3/2}.
$$

Next we note that
\begin{equation}\label{sp}
|x-\tilde{x}_n(t;x_0)|^2+|x-\tilde{x}_n(t;x_0^\prime)|^2=2\Big|x-\frac{\tilde{x}_n(t;x_0)+\tilde{x}_n(t;x_0^\prime)}{2}\Big|^2+\frac{1}{2}|\tilde{x}_n(t;x_0)-\tilde{x}_n(t;x_0^\prime)|^2,
\end{equation}
with which we have
\begin{align*}
\int_0^{2\pi} \int_{\mathbb{R}} |J_{jn}^{2}|dxdy  &\leq C \varepsilon^2  \int_{\mathbb{R}}e^{-\frac{\delta}{\varepsilon}x^2} dx e^{-\frac{\delta}{4\varepsilon}|\tilde{x}_n(t;x_0)-\tilde{x}_n(t;x_0^\prime)|^2}.
\end{align*}
Hence,
\begin{equation}\label{4.9}
\left| \int_0^{2\pi}\int_{\mathbb{R}} \int_{K_0}\int_{K_0} J_{jn}^2 dx_0d{x_0'} dxdy\right| \leq C\varepsilon^{\frac{5}{2}}\int_{K_0}\int_{K_0}e^{-\frac{\delta}{4\varepsilon}|\tilde{x}_n(t;x_0)-\tilde{x}_n(t;x_0^\prime)|^2}dx_0dx_0^\prime.
\end{equation}
In order to obtain (\ref{Izz}), we need to recover an extra  $\displaystyle\varepsilon^{\frac{1}{2}}$ from the integral on the right hand side,   which is difficult when
$\displaystyle |\tilde{x}_n(t;x_0)-\tilde{x}_n(t;x_0^\prime)|$ is small.

Following \cite{LRT11}, we split the set  $K_0\times K_0$ into
$$
D_1(t,\theta)=\Big\{(x_0,x_0^\prime) : |\tilde{x}_n(t,x_0)-\tilde{x}_n(t,x_0^\prime)|\geq\theta|x_0-x_0^\prime|\Big\},
$$
which corresponds to the  non-caustic region of the solution, and the set  associated with the caustic region
$$
D_2(t,\theta)=\Big\{(x_0,x_0^\prime) : |\tilde{x}_n(t,x_0)-\tilde{x}_n(t,x_0^\prime)| < \theta|x_0-x_0^\prime|\Big\}.
$$
For the former we have
$$
\int_{D_1} e^{-\frac{\delta}{4\varepsilon}|\tilde{x}_n(t;x_0)-\tilde{x}_n(t;x_0^\prime)|^2}dx_0dx_0^\prime \leq \int_{D_1} e^{-\frac{\delta \theta^2}{4\varepsilon}|x_0-x_0^\prime|^2}dx_0dx_0^\prime.
$$
Letting $\Lambda =\sup_{x_0, x_0'\in K_0} |x_0-x_0'|< \infty$ be the diameter of $K_0$, we continue
to estimate the above $D_1$-integral
\begin{align*}
 \int_{D_1} e^{-\frac{\delta \theta^2}{4\varepsilon}|x_0-x_0^\prime|^2}dx_0dx_0^\prime  &\leq  C \int_0^\Lambda e^{-\frac{ \delta\theta^2}{4\varepsilon} \tau^2} d\tau \leq C \varepsilon^{1/2},
\end{align*}
which concludes the estimate of $J_{jn}^2$ when restricted on $D_1$ in (\ref{4.9}).

To estimate $J^2_{jn}$ restricted on $D_2$, we need the following result on phase estimate.
\begin{lemm}\textbf{(Phase estimate)}
For $(x_0, x_0')\in D_2$, it holds
$$|\partial_x\psi_n(t,x,x_0,x_0^\prime)|\geq C(\theta,\eta)|x_0-x_0^\prime|,$$
where $C(\theta,\eta)$ is independent of $x$ and positive if $\theta$ and $\eta$ are sufficiently small.
\end{lemm}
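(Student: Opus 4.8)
The plan is to write out $\psi_n$ explicitly and isolate the $x$-derivative. Recall from \eqref{3.11} that $\Phi_n(t,x;x_0)=S_n+p_n(x-\tilde x_n)+\tfrac12 M_n(x-\tilde x_n)^2$, so $\partial_x\Phi_n = p_n + M_n(x-\tilde x_n)$, and likewise $\partial_x\overline{\Phi_n'} = \overline{p_n'} + \overline{M_n'}(x-\tilde x_n')$. Since the momenta $p_n$, $p_n'$ are real (they solve the real Hamiltonian system \eqref{3.10}), we get
\begin{equation*}
\partial_x\psi_n = (p_n - p_n') + M_n(x-\tilde x_n) - \overline{M_n'}(x-\tilde x_n').
\end{equation*}
First I would handle the real part. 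On the support of $\rho_2$ we have $|x-\tilde x_n|\le 2\eta$ and $|x-\tilde x_n'|\le 2\eta$, so the $M$-terms are $O(\eta)$; thus $|\operatorname{Re}\partial_x\psi_n| \ge |p_n-p_n'| - C\eta$. By the flow bounds on the bicharacteristic system \eqref{3.23} — $\tilde x_n$, $p_n$ depend Lipschitz-continuously on $x_0$ with constants controlled on $[0,T]$ — there is a constant $L$ with $|p_n-p_n'| \le L|x_0-x_0'|$ and, crucially, a reverse bound $|\tilde x_n-\tilde x_n'|+|p_n-p_n'| \ge c_0|x_0-x_0'|$ coming from invertibility of the flow map (this is exactly the non-squeezing input of \cite{LRT11}). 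On $D_2$ the first term is small, $|\tilde x_n-\tilde x_n'| < \theta|x_0-x_0'|$, so $|p_n-p_n'| \ge (c_0-\theta)|x_0-x_0'|$.

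The remaining point is the imaginary part, which must be used to absorb the case where $\operatorname{Re}\partial_x\psi_n$ happens to be small. Writing $M_n = R_n + i T_n$ with $T_n>0$ (positivity of $\operatorname{Im}M_n$ is guaranteed for all $t$ by the Riccati analysis in \cite{DGR06}), we have
\begin{equation*}
\operatorname{Im}\partial_x\psi_n = T_n(x-\tilde x_n) + T_n'(x-\tilde x_n').
\end{equation*}
Then $|\partial_x\psi_n|^2 = |\operatorname{Re}\partial_x\psi_n|^2 + |\operatorname{Im}\partial_x\psi_n|^2$, and I would estimate $|\partial_x\psi_n| \ge \tfrac{1}{\sqrt2}\bigl(|\operatorname{Re}\partial_x\psi_n| + |\operatorname{Im}\partial_x\psi_n|\bigr)$ and bound the sum from below. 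Combine $|\operatorname{Re}\partial_x\psi_n| \ge |p_n-p_n'| - C(|x-\tilde x_n|+|x-\tilde x_n'|)$ with $|\operatorname{Im}\partial_x\psi_n| \ge \min_n T_n \cdot (|x-\tilde x_n|+|x-\tilde x_n'|) - C(|x-\tilde x_n| - |x-\tilde x_n'|)$-type manipulations: whichever of the two geometric quantities $|x-\tilde x_n|+|x-\tilde x_n'|$ is (large or small), one of the two terms dominates. Concretely, if $|x-\tilde x_n|+|x-\tilde x_n'| \ge \tfrac{1}{2C}|p_n-p_n'|$ then the imaginary part alone is $\gtrsim |p_n-p_n'| \gtrsim |x_0-x_0'|$; otherwise the real part is $\gtrsim |p_n-p_n'| \gtrsim |x_0-x_0'|$. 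Either way $|\partial_x\psi_n| \ge C(\theta,\eta)|x_0-x_0'|$, and tracking constants shows $C(\theta,\eta)>0$ once $\theta$ is smaller than $c_0$ and $\eta$ is small relative to $\min_n T_n$ and the Lipschitz constants.

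The main obstacle is getting the reverse Lipschitz (non-squeezing) bound $|\tilde x_n - \tilde x_n'| + |p_n - p_n'| \ge c_0|x_0-x_0'|$ with $c_0$ uniform in $t\in[0,T]$ and in $x_0,x_0'\in K_0$; this is precisely where one must invoke the non-squeezing lemma of Liu--Runborg--Tanushev \cite{LRT11}, applied here to the Hamiltonian flow generated by the band energy $E_n$ together with $V_e$, using that $E_n \in C^\infty$ near the relevant (possibly complex) momenta thanks to the spectral gap \eqref{1.5} and $V_e \in C_b^{d+4}$. Once that bound is in hand, the rest is the elementary case-split above; I would also need to check that the constant $C(\theta,\eta)$ degenerates continuously as $\theta,\eta\to 0$ so that it can be chosen strictly positive, which is immediate from the explicit form of the estimates.
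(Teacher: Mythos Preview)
The paper does not actually prove this lemma; it simply cites \cite{LRT11} and notes that the non-squeezing argument applies verbatim to the present Hamiltonian flow. Your sketch is therefore more than the paper provides, and your identification of the key ingredients---the explicit formula for $\partial_x\psi_n$, the reverse-Lipschitz (non-squeezing) bound $|\tilde x_n-\tilde x_n'|+|p_n-p_n'|\ge c_0|x_0-x_0'|$, and the deduction $|p_n-p_n'|\ge(c_0-\theta)|x_0-x_0'|$ on $D_2$---is exactly right.

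There is, however, a genuine gap in your case-split. You claim that when $|x-\tilde x_n|+|x-\tilde x_n'|$ is large the imaginary part alone controls $|p_n-p_n'|$, relying on an inequality of the form $|T_n(x-\tilde x_n)+T_n'(x-\tilde x_n')|\gtrsim |x-\tilde x_n|+|x-\tilde x_n'|$. This is false: positivity of $T_n,T_n'$ does \emph{not} prevent cancellation between the two summands (take $T_n=T_n'$ and $x=\tfrac12(\tilde x_n+\tilde x_n')$, where the imaginary part vanishes while $|x-\tilde x_n|+|x-\tilde x_n'|=|\tilde x_n-\tilde x_n'|$; in higher dimensions the zero set is larger still). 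So the dichotomy as written does not close.

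The repair is to use the imaginary part not as a lower bound but as a linear constraint. Writing $u=x-\tilde x_n$, $u'=x-\tilde x_n'$, you have the two relations
\[
T_n u + T_n' u' = \operatorname{Im}\partial_x\psi_n,\qquad u-u' = -(\tilde x_n-\tilde x_n').
\]
Since $T_n+T_n'$ is positive definite (uniformly on $[0,T]\times K_0$), this system solves to give $|u|+|u'|\le C\bigl(|\operatorname{Im}\partial_x\psi_n|+|\tilde x_n-\tilde x_n'|\bigr)\le C\bigl(|\operatorname{Im}\partial_x\psi_n|+\theta|x_0-x_0'|\bigr)$. Substituting into the real part yields
\[
|\operatorname{Re}\partial_x\psi_n| \ge |p_n-p_n'| - C'(|\operatorname{Im}\partial_x\psi_n|+\theta|x_0-x_0'|),
\]
hence $|\operatorname{Re}\partial_x\psi_n|+C'|\operatorname{Im}\partial_x\psi_n|\ge (c_0-(1+C')\theta)|x_0-x_0'|$, and the claimed bound follows with $C(\theta,\eta)>0$ for $\theta$ small. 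This is essentially how the argument in \cite{LRT11} runs; once you make this correction your outline is complete.
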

The proof of this result is due to \cite{LRT11}, where the non-squeezing lemma  is crucial. Since all requirements  for the non-squeezing argument are satisfied by the construction of Gaussian beam solutions in present work, we therefore omit  details of the proof.

To continue, we note that the phase estimate ensures that for $(x_0, x_0')\in D_2$, $x_0\neq x_0^\prime$,  $\partial_x\psi_n(t,x,x_0,x_0^\prime)\neq0.$  Therefore,  in order to estimate $J_{jn}^2 |_{D_2}$ we shall use  the following non-stationary phase lemma.
\begin{lemm}\textbf{(Non-stationary phase lemma)}
Suppose that $u(x,\xi)\in C_0^\infty(\Omega\times Z$ where $\Omega$ and $Z$ are compact sets and $\psi(x;\xi)\in C^\infty(O)$ for some open neighborhood $O$ of $\Omega\times Z.$ If $\partial_x\psi$ never vanishes in $O,$ then for any $K=0,1,\dots,$
$$\Big|\int_{\Omega}u(x;\xi)e^{i\psi(x;\xi)/\varepsilon}dx\Big|\leq C_K\varepsilon^K\sum_{\alpha=1}^K\int_{\Omega}\frac{|\partial_x^{\alpha}u(x;\xi)|}{|\partial_x\psi(x;\xi)|^{2K-\alpha}}e^{-\Im\psi(x;\xi)/\varepsilon}dx,$$
where $C_K$ is a constant independent of $\xi.$
\end{lemm}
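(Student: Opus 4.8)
The plan is to prove this by the classical non-stationary phase argument: repeated integration by parts against the operator that reproduces $e^{i\psi/\varepsilon}$. Since $\partial_x\psi$ never vanishes on $O$, the first order operator
$$
L:=\frac{\varepsilon}{i\,\partial_x\psi(x;\xi)}\,\partial_x
$$
is well defined there and satisfies $L\big(e^{i\psi/\varepsilon}\big)=e^{i\psi/\varepsilon}$; its formal transpose is ${}^{t}L\,v=-\partial_x\!\big(\tfrac{\varepsilon}{i\,\partial_x\psi}\,v\big)$. For each fixed $\xi$ the function $u(\cdot\,;\xi)$ is compactly supported in $\Omega$, so no boundary terms arise, and $K$ successive integrations by parts give
$$
\int_\Omega u(x;\xi)\,e^{i\psi(x;\xi)/\varepsilon}\,dx=\int_\Omega u\,L^{K}\big(e^{i\psi/\varepsilon}\big)\,dx=\int_\Omega \big({}^{t}L\big)^{K}u(x;\xi)\;e^{i\psi(x;\xi)/\varepsilon}\,dx .
$$

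Next I would unwind $({}^{t}L)^{K}u$ by the Leibniz rule. Writing $a:=\varepsilon/(i\,\partial_x\psi)$, one has $({}^{t}L)^{K}u=(-1)^{K}\partial_x\big(a\,\partial_x(a\cdots\partial_x(a\,u))\big)$, which expands into a finite linear combination, with universal combinatorial coefficients, of terms in which $\alpha$ of the $K$ available derivatives fall on $u$ ($0\le\alpha\le K$) and the remaining $K-\alpha$ are distributed among the $K$ copies of $a$. Each differentiation of $a=\varepsilon/(i\,\partial_x\psi)$ brings down a polynomial in $\partial_x^{2}\psi,\dots,\partial_x^{K+1}\psi$ and raises the power of $\partial_x\psi$ in the denominator by at most one, so together with the $K$ ``baseline'' powers every such term is bounded by
$$
C\,\varepsilon^{K}\,\big|q_{K,\alpha}(x;\xi)\big|\,\frac{|\partial_x^{\alpha}u(x;\xi)|}{|\partial_x\psi(x;\xi)|^{\,r}},\qquad 0\le r\le 2K-\alpha,
$$
where $q_{K,\alpha}$ is a fixed polynomial expression in $\partial_x^{2}\psi,\dots,\partial_x^{K+1}\psi$. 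Since $\overline{\Omega\times Z}$ is a compact subset of $O$ and $\psi\in C^{\infty}(O)$, the quantity $\partial_x\psi$ is bounded \emph{above} there, so $|\partial_x\psi|^{-r}\le C\,|\partial_x\psi|^{-(2K-\alpha)}$; moreover each $q_{K,\alpha}$ is bounded on $\Omega\times Z$ by a constant depending only on $\psi$, $\Omega$, $Z$ and $K$, hence uniformly in $\xi$. Absorbing all of these bounds into $C_{K}$ gives the pointwise estimate $|({}^{t}L)^{K}u|\le C_{K}\,\varepsilon^{K}\sum_{\alpha=0}^{K}|\partial_x^{\alpha}u|\,/\,|\partial_x\psi|^{2K-\alpha}$.

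Finally I would bring the absolute value inside the last integral above, use $|e^{i\psi/\varepsilon}|=e^{-\Im\psi/\varepsilon}$, and sum, which yields
$$
\Big|\int_\Omega u(x;\xi)\,e^{i\psi(x;\xi)/\varepsilon}\,dx\Big|\le C_{K}\,\varepsilon^{K}\sum_{\alpha=0}^{K}\int_\Omega \frac{|\partial_x^{\alpha}u(x;\xi)|}{|\partial_x\psi(x;\xi)|^{2K-\alpha}}\,e^{-\Im\psi(x;\xi)/\varepsilon}\,dx,
$$
which is the asserted inequality (the sum is understood to include the $\alpha=0$ term, as is needed already when $K=0$). The only delicate point is the combinatorial bookkeeping of the Leibniz expansion — checking that the power of $\partial_x\psi$ in the denominator of the $\partial_x^{\alpha}u$-term never exceeds $2K-\alpha$ and that the coefficients $q_{K,\alpha}$ carry no $\xi$-dependence beyond the $C^{\infty}$-bounds of $\psi$ on the compact set; this is the main, and essentially the only, obstacle, and it is routine.
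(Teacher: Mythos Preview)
Your proposal is correct and is exactly the classical non-stationary phase argument: define $L=\dfrac{\varepsilon}{i\,\partial_x\psi}\,\partial_x$, integrate by parts $K$ times, and expand $({}^tL)^Ku$ via Leibniz, tracking that the $\partial_x^\alpha u$-term carries at most $|\partial_x\psi|^{-(2K-\alpha)}$ in the denominator. The paper does not supply its own proof of this lemma; it is quoted as a standard tool (the version used here is that of \cite{LRT11}) and applied directly in the evolution-error estimate, so there is nothing to compare against beyond noting that your argument is the expected one.

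Your parenthetical remark that the sum should begin at $\alpha=0$ is well taken: the term in the Leibniz expansion where all $K$ derivatives fall on the factors $a=\varepsilon/(i\,\partial_x\psi)$ and none on $u$ produces exactly the $\alpha=0$ contribution, and the $K=0$ case is vacuous otherwise. This is a minor inaccuracy in the stated form of the lemma; it does not affect the paper's application, since there the lemma is used with $K=2$ (resp.\ $K=d+1$) and the $\alpha=0$ term is bounded in the same way as the others.
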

Using the non-stationary lemma, we obtain for $(x_0, x_0')\in D_2$,

\begin{align*}
\left| \int_0^{2\pi}  \int_{\mathbb{R}} J^2_{jn}dxdy \right|  &=\frac{C_K\varepsilon^{K+2j-1}}{2\pi} \times \\
&  \quad \int_0^{2\pi}\int_{\mathbb{R}} \sum_{\alpha=1}^K\frac{|\partial_x^{\alpha}[\rho_1G_{jn}\overline{G_{jn}^\prime}(x-\tilde{x}_n)^{3-2j}(x-\tilde{x}_n^\prime)^{3-2j}]|}{|\partial_x\psi_n(t,x;x_0,x_0^\prime)|^{2K-\alpha}}e^{-\Im\psi_n(x;\xi)/\varepsilon} dxdy.
\end{align*}

By Leibniz's rule,
\begin{eqnarray*}
\partial_x^{\alpha}[\rho_1G_{jn}\overline{G_{jn}^\prime}(x-\tilde{x}_n)^{3-2j}(x-\tilde{x}_n^\prime)^{3-2j}]&=&\sum_{\alpha_1+\alpha_2=\alpha}(\partial_x^{\alpha_1}[\rho_1G_{jn}\overline{G_{jn}^\prime}]\\
&+&\partial_x^{\alpha_2}[(x-\tilde{x}_n)^{3-2j}(x-\tilde{x}_n^\prime)^{3-2j}]).
\end{eqnarray*}
{Here we take a detailed look at the term $\Big|\displaystyle\int_0^{2\pi}\partial_x^{\alpha_1}[\rho_1G_{jn}\overline{G_{jn}^\prime}]dy\Big|$, for each case when $j=0,1.$
For $j=0$, we have
\begin{align*}
\Big|\int_0^{2\pi}\partial_x^{\alpha_1}[\rho_1G_{0n}\overline{G_{0n}^\prime}]dy\Big|&=\Big|\int_0^{2\pi}a_n\overline{a_n'}\partial_x^{\alpha_1}(\rho_1\partial_x^3F_nz_n\overline{\partial_x^3F'_nz'_n})dy\Big|\\
&=\Big|\sum_{\alpha_{11}+\alpha_{12}=\alpha_1}a_n\overline{a_n'}\partial_x^{\alpha_{11}}[\rho_1\partial_x^3F_n\overline{\partial_x^3F_n'}]\cdot\int_0^{2\pi}\partial_x^{\alpha_{12}}[z_n\overline{z'}_n]dy\Big|\\
&\leq|a_n|^2|M_n|^{\alpha_{12}}
\cdot\sum_{\alpha_{11}+\alpha_{12}=\alpha_1}\partial_x^{\alpha_{11}}[\rho_1\partial_x^3F_n\overline{\partial_x^3F_n'}]\cdot\int_0^{2\pi}\partial_k^{\alpha_{12}}[z_n\overline{z'}_n]dy\Big|\\
&\leq CZ^2:=C_2.
\end{align*}}
{
For $j=1$,  we notice that $G_{1n}$ does not depend on $y,$
\begin{align*}
\Big|\partial_x^{\alpha_1}[\rho_1G_{1n}\overline{G_{1n}^\prime}]\Big|&\sim\Big| a_n\overline{a_n'}\partial_x^{\alpha_1}(\rho_1\dot{M}_n\langle\partial_kz_n,z_n\rangle\overline{\dot{M}^\prime_n\langle\partial_kz_n',z_n'\rangle})\Big|\\
&=\Big|\sum_{\alpha_{11}+\alpha_{12}=\alpha_1}a_n\overline{a_n'}\partial_x^{\alpha_{11}}[\rho_1\dot{M}_n\overline{\dot{M}_n'}]\cdot\partial_x^{\alpha_{12}}[\langle\partial_k z_n,z_n\rangle\overline{\langle \partial_kz'_n,z_n\rangle}]\Big|\\
&\leq|a_n|^2|M_n|^{\alpha_{12}}\cdot\sum_{\alpha_{11}+\alpha_{12}=\alpha_1}|\partial_x^{\alpha_{11}}[\rho_1\dot{M}_n\overline{\dot{M}_n^\prime}]\partial_k^{\alpha_{12}}[\langle\partial_k z_n,z_n\rangle\overline{\langle \partial_kz'_n,z_n\rangle}]|\\
&\leq CZ^2:=C_2.
\end{align*}}
Here we used the fact that indices $\alpha_{12}$ and $\alpha_{13}$ are not greater than $2$ and $j$ is either $0$ or $1,$ which is consistent with the boundedness requirement in (\ref{1.5+}).

Going further,
\begin{align*}
\partial_x^{\alpha_2}[(x-\tilde{x}_n)^{3-2j}(x-\tilde{x}_n^\prime)^{3-2j}]\leq& C\sum_{\alpha_{21}+\alpha_{22}=\alpha_2}(x-\tilde{x}_n)^{3-2j-\alpha_{21}}\\
&\cdot (x-\tilde{x}_n^\prime)^{3-2j-\alpha_{22}},
\end{align*}
we have
\begin{align*}
&\int_0^{2\pi}  \int_{\mathbb{R}}|\partial_x^{\alpha_1}[\rho_1G_{jn}\overline{G_{jn}^\prime}]\partial_x^{\alpha_2}[(x-\tilde{x}_n)^{3-2j}(x-\tilde{x}_n^\prime)^{3-2j}]|e^{-\Im\psi_n/\varepsilon}dxdy\\
&\leq C\sum_{\alpha_{21}+\alpha_{22}=\alpha_2}\int_{\mathbb{R}}|x-\tilde{x}_n|^{3-2j-\alpha_{21}}|x-\tilde{x}_n^\prime|^{3-2j-\alpha_{22}}e^{-\Im\psi_n/\varepsilon}dx \\
&\leq C\varepsilon^{\frac{-\alpha_2}{2}+3-2j}\int_{\mathbb{R}} e^{-\frac{\delta}{\varepsilon}(|x-\tilde x_n|^2+ |x-\tilde{x}_n^\prime|^2)}dx \\
&\leq C\left( \frac{\pi}{\delta}\right)^{1/2}\varepsilon^{\frac{1-\alpha_2}{2}+3-2j}e^{-\frac{\delta}{2\varepsilon}|\tilde{x}_n-\tilde{x}_n^\prime|^2},
\end{align*}
where (\ref{sp}) has been used.  Hence,
\begin{align*}
\left| \int_0^{2\pi}\int_{\mathbb{R}} \int_{D_2} J_{jn}^2 dx_0d{x_0'} dxdy \right| \leq &
 \int_{D_2}e^{-\frac{\delta}{2\varepsilon}|\tilde{x}-\tilde{x}^\prime|^2}\sum_{\alpha=1}^K\frac{\varepsilon^{\frac{\alpha}{2}+2j-1}}{\inf|\partial_x\psi_n/\sqrt{\varepsilon}|^{2K-\alpha}}\\
\cdot\sum_{\alpha_{1}+\alpha_{2}=\alpha} &C \varepsilon^{\frac{1-\alpha_2}{2}+3-2j}dx_0dx_0^\prime \\
\leq &C\varepsilon^{\frac{5}{2}}\int_{D_2} e^{-\frac{\delta}{2\varepsilon}|\tilde{x}_n-\tilde{x}_n^\prime|^2} \sum_{\alpha=1}^K\frac{1}{\inf|\partial_x\psi_n/\sqrt{\varepsilon}|^{2K-\alpha}}
dx_0dx_0^\prime.
\end{align*}
The last estimate together with (\ref{4.9}) yields:
\begin{eqnarray*}
\left| \int J_{jn}^{2}1_{D_2} \right|  &\leq&C\varepsilon^{\frac{5}{2}}\int_{D_2} e^{-\frac{\delta}{2\varepsilon}|\tilde{x}_n-\tilde{x}_n^\prime|^2}\min\Big[1,\sum_{\alpha=1}^K\frac{1}{\inf|\partial_x\psi_n/\sqrt{\varepsilon}|^{2K-\alpha}}\Big]dx_0dx_0^\prime\\
&\leq&C\varepsilon^{\frac{5}{2}}\int_{D_2} e^{-\frac{\delta}{2\varepsilon}|\tilde{x}_n-\tilde{x}_n^\prime|^2}\sum_{\alpha=1}^K\ \min\Big[1,\frac{1}{\inf|\partial_x\psi_n/\sqrt{\varepsilon}|^{2K-\alpha}}\Big]dx_0dx_0^\prime\\
&\leq&C \varepsilon^{\frac{5}{2}}\int_{K_0}\int_{K_0}e^{-\frac{\delta}{2\varepsilon}|\tilde{x}_n-\tilde{x}_n^\prime|^2} \sum_{\alpha=1}^K \frac{1}{1+\inf|\partial_x\psi_n/\sqrt{\varepsilon}|^{2K-\alpha}}dx_0dx_0^\prime\\
&\leq&C \varepsilon^{\frac{5}{2}}\int_{K_0}\int_{K_0}\sum_{\alpha=1}^K\frac{1}{1+(C(\theta,\eta)|x_0-x_0^\prime|/\sqrt{\varepsilon})^{2K-\alpha}}dx_0dx_0^\prime.
\end{eqnarray*}
Taking $K=2$ and changing variable $\displaystyle\xi=\frac{x_0-x_0^\prime}{\sqrt{\varepsilon}}$,  we compute
\begin{align*}
\left| \int J_{jn}^{2}1_{D_2} \right|
&\leq  C  \varepsilon^{\frac{5}{2}} \int_{K_0 \times K_0} \frac{1}{1 + \left(|x_0-x_0'|/\sqrt{\varepsilon} \right)^K} dx_0dx_0'\\
& \leq C \varepsilon^3  \int_0^\infty \frac{1}{1+\xi^K}d\xi =  \frac{\pi}{2} C \varepsilon^3,
\end{align*}
which gives (\ref{Izz}) when restricted to the caustic region.

Putting all together we complete the proof  of (\ref{Izz}), hence Theorem \ref{Th2.2}.

\section{Extensions }
The extension of the one-dimensional results to multidimensional case is straightforward.  We still have the two-scale formulation,
\begin{equation}
i\varepsilon\frac{\partial\tilde{\Psi}}{\partial t}=-\frac{1}{2}(\varepsilon\nabla_x+\nabla_y)^2\tilde{\Psi}+V(\frac{x}{\varepsilon})\tilde{\Psi}+V_e(x)\tilde{\Psi},\quad x\in{\mathbb{R}^d,}
\end{equation}
\begin{equation}\Psi(0,x,y)=g(x,y)e^{iS_0(x)/{\varepsilon}},\quad x\in K_0\subset\mathbb{R}^d,\quad y\in[0,2\pi]^d.
\end{equation}
The Gaussian beam construction of the phase will have the following form:
\begin{equation}
\Phi(t,x;x_0)=S(t;x_0)+p(t;x_0)\cdot(x-\tilde{x}(t;x_0))+\frac{1}{2}(x-\tilde{x}(t;x_0))^\top \cdot M(x-\tilde{x}(t;x_0)).
\end{equation}

Following the procedure of the Gaussian beam construction in section $3$, we only check possible different formulations in the multidimensional setting. For instance, equation (\ref{3.4}) will take a form:
$$c_j=\Delta_xA_{j-2}+iLA_{j-1}+GA_j,\quad j=2,3,\dots,l+2,$$
where $L$ reads
$$
L=\partial_t+(-i\nabla_y+\nabla_x\Phi)\cdot \nabla_x + \frac{1}{2} \Delta_x \Phi.
$$
The evolution equations for the Gaussian beam phase components:
\begin{equation}
\begin{cases} \dot{\tilde{x}}=\nabla_kE(p),\quad \tilde{x}|_{t=0}=x_0, \\
\dot{p}=-\nabla_xV_e(\tilde{x}),\quad p|_{t=0}=\nabla_xS(x_0),\\
\dot{S}=p\cdot\nabla_k E(p)-E(p)-V_e(\tilde{x}),\quad S|_{t=0}=S_0(x_0),\\
\dot{M}=-M \nabla_k^2E(p) M- \nabla_x^2V_e(\tilde{x}),\quad M|_{t=0}=\nabla_x^2 S_0(x_0)+iI.\\
\end{cases}
\end{equation}
An equation for the amplitude can be derived from (\ref{c1}), however because of the matrix $M,$ it has more sophisticated form than in $1$- dimensional case:
\begin{equation}\label{amp}
\dot{a}=a(\langle(\nabla_kz\cdot(\nabla_xV_e(\tilde{x})+M\nabla_kE(p)),z\rangle-\langle(-i\nabla_y+p)\cdot M\nabla_kz,z\rangle-\frac{1}{2}Tr(M)).
\end{equation}
One can easily verify that the amplitude equation for $d=1$ follows from (\ref{amp}).

The superposition formula (\ref{3.24}) for the approximate solution is:
\begin{equation}
\tilde{\Psi}^{\varepsilon}(t,x,y)=\frac{1}{(2\pi\varepsilon)^{d/2}}\int_{K_0}\tilde{\Psi}_{GB}^\varepsilon(t,x,y,x_0)dx_0.
\end{equation}
The technique for estimating the initial error can be carried out in multi-dimensional setting, without any further difficulty.

As for the evolution error, some clarification of the notation needs to be done. For example, the main representations (\ref{5.1})-(\ref{5.3}),
can be reformulated as follows:
\begin{equation}
I_{jn}=\frac{\varepsilon^{j-\frac{d}{2}}}{(2\pi)^{\frac{d}{2}}}\int_{K_0}\sum_{|\beta|=(3-2j)_+}G_{jn\beta}(t, x, y;x_0)
(x-\tilde{x}_n(t;x_0))^{\beta}e^{i\Phi_n(t,x;x_0)/\varepsilon}dx_0,
\end{equation}
{where
\begin{align}
G_{0n\beta}(t,x;x_0,y)&=\frac{1}{\beta!}a_n(t;x_0)\partial_x^\beta F_n(t,x^*)z_n(k,y),\quad |\beta|=3,\\
G_{1n\beta}(t,x;x_0)&=(ia\langle\partial_kz_n,z_n\rangle\dot{M}_n(t;x_0)-\sum_{|\beta|=3} \frac{1}{\beta!}\partial_x^\beta F_n(t,x^*)A_{1n}(x-\tilde{x})^{(\beta-1)_+}),\\
G_{2n}(t,x;x_0,y)&=a_n(t;x_0)(Tr(M_n))^2\Delta_kz_n(k_n,y)+iLA_{1n}.
\end{align}}
Finally,
\begin{align*}
J_{jn}(x, y, x_0, x_0')&=\frac{\varepsilon^{2j-d}}{(2\pi)^d}\sum_{|\beta|=(3-2j)_+}(G_{jn\beta}(t, x, y;x_0))(x-\tilde{x}_n(t;x_0))^{\beta}\\
&\times \sum_{|\beta|=(3-2j)_+}\overline{(G_{jn\beta}(t, x, y;x_0'))}(x-\tilde{x}_n(t;x_0'))^{\beta}e^{i\psi_n/\varepsilon}.
\end{align*}

The rest of the ingredients of the proof remain unchanged, {except  when using the non-stationary phase method $K$ need to be taken as  $d+1$}.

Another possible extension of this result is to apply our technique to higher order Gaussian beam superpositions, using the Gaussian beam construction in \cite{DGR06}.

{Our results valid for finite number of bands can be used in practice by approximating a given high frequency initial data by finite number of bands within certain accuracy.
An open question is to deal with infinite number of bands,  which is left in a future work. }

\section*{Acknowledgments}
This research was  partially supported by the National Science Foundation under Grant DMS 09-07963 and DMS 13-12636.

\bigskip
\bibliographystyle{abbrv}

\begin{thebibliography}{10}

\bibitem{AP05}
G. Allaire, A. Piatniski.
\newblock Homogenization of the Schr\"odinger equation and effective mass theorems.
\newblock   {\em  Commun. Math. Phys.}  258 (2005), 1-22.

\bibitem{VB56}
V. M. Babich.
\newblock Ray method of computation of intensity of wave fronts.
\newblock  {\em  Dokl. Akad. Nauk. SSSR},  110(3) (1956), 355--.


 \bibitem{BAA09}
Bougacha, S. and Akian, J.L. and Alexandre, R.
  \newblock  {G}aussian beams summation for the wave equation in a convex domain.
    \newblock  {\em Commun. Math. Sci.},  7(4), 973--1008, 2009.


\bibitem{BB72}
V. M. Babich and V. S. Buldyrev.
\newblock Asymptotic methods in shortwave diffraction problems.
\newblock   {\em Nauka, Moscow},  1972,  456--.

\bibitem{BP73}
V. M. Babich and T. F. Pankratova.
\newblock On discontinuities of Green's function of the wave equation with variable coefficient.
\newblock   {\em  Problemy Matem. Fiziki}, 6, 1973. Leningrad University, Saint-Petersburg.

\bibitem{BP89}
V.M. Babi\v{c}  and M.M. Popov.
\newblock Gaussian summation method (review).
 \newblock {\em Izv. Vyssh. Uchebn. Zaved. Radiofiz}, 32(12) (1989),  1447--1466.

\bibitem{BLP78}
A. Bensoussan, J-L. Lions, G. Papanicolaou.
\newblock Asymptotic Analysis for Periodic Structures.
\newblock   {\em  Studies in Mathematics and its Applications}, vol. 5, North-Holland Publishing Co., Amsterdam, 1978.

\bibitem{Bu87} 
V.S. Buslaev.
\newblock Semi-classical approximation for equations with periodic coefficients.
\newblock {\em Russian. Math. Surveys}, 42 (1987),  97--125. 


\bibitem{Cl64a}
J. Des Cloizeaux.
\newblock  Analytic properties of n-dimensional energy bands and Wannier functions.
\newblock {\em Phys. Rev. } 135 (1964), A685--697.

\bibitem{Cl64b}
J. Des Cloizeaux.
\newblock  Energy bounds and projection operators in a crystal: Analytic and asymptotic properties.
\newblock {\em Phys. Rev. } 135 (1964), A698--707.

\bibitem{CS12}
R. Carles  and Ch. Sparber.
\newblock Semiclassical wave packet dynamics in Schroedinger equations with periodic potentials.
\newblock  {\em  Discrete Contin. Dyn. Syst. Ser. B.}  17(3)  (2012),  757--774.

\bibitem{CPP:1982}
V.~$\check{C}$erven$\acute{y}$, M.~Popov, and
  I.~P$\check{s}$en$\check{c}$$\acute{i}$k.
\newblock Computation of wave fields in inhomogeneous media -- Gaussian beam
  approach.
\newblock {\em Geophysics J. R. Astr. Soc.}, 70 (1982), 109--128.


\bibitem{DGR02}
 M. Dimassi, J.-C. Guillot and J. Ralston.
  \newblock  Semi-classical asymptotics in magnetic Bloch bands.
    \newblock   {\em  J. Phys. A:  Math. G.}  35 (2002), 7597--7605.

\bibitem{DGR06}
 M. Dimassi, J.-C. Guillot and J. Ralston.
  \newblock  Gaussian beam construction for adiabatic perturbations.
    \newblock   {\em  Mathematical Physics, Analysis and Geometry},  9 (2006), 187--201.


\bibitem{GMMP97} 
 P. G'erard, P. A. Markowich, N. J. Mauser, and F. Poupaud.
 \newblock  Homogenization limits and Wigner transforms. 
 \newblock {\em Comm. Pure Appl. Math.}  50 (4) (1997),  323--379.

\bibitem{GMS91}
C. G�rard, A. Martinez  and  J. Sj\"{o}strand.
\newblock A mathematical approach to the effective Hamiltonian in perturbed periodic problems.
\newblock {\em Commun. Math. Phys.}, 142 (1991),  217--244.

\bibitem{GRT88}
 J.-C. Guillot J. Ralston, E. Trubovitz.
  \newblock  Semi-classical methods in solid state physics.
    \newblock   {\em  Commun. Math. Phys.}  116 (1988),  401--415.

\bibitem{GT98}
D. Gilbarg and N. S. Trudinger.
\newblock Elliptic Partial Differential Equations of Second Order.
\newblock  {\em  Reprint of the 1998 Edition, Springer}, Lexington, KY, 2011.

\bibitem{Ha80}
G. A. Hagedorn.
\newblock  Semiclassical quantum mechanics. I. The $h\to 0$ limit for coherent states.
\newblock {\em Comm. Math. Phys.}  71(1) (1980), 77--93.

\bibitem{Ho93}
W. Horn.
\newblock  Semiclassical construction in solid state physics.
\newblock {\em Comm. P.D.E.}   16 (1993), 255--290.


\bibitem{Hill01}
N.~Hill.
\newblock Prestack gaussian-beam depth migration.
\newblock {\em Geophysics}, 66(4) (2001), 1240--1250.


\bibitem{LH71}
 L. H\"ormander.
  \newblock  On the existence and the regularity of solutions of linear pseudo-differential equations.
    \newblock   {\em  L'Enseignement Mathematique}, XVII (1971), 99--163.

\bibitem{HST01}
F. H\"overmann, H. Spohn and S. Teufel.
 \newblock Semiclassical limit for the Schr\"odinger equation with a short scale periodic potential.
  \newblock Commun. Math. Phys. 215, 609-629 (2001).

\bibitem{Kohn59}
W. Kohn.
\newblock Analytic properties of Bloch waves and Wannier functions.
\newblock  {\em  Physical Review},  115 (4) (1959), 809-821.

\bibitem{LR09}
H. Liu and J. Ralston.
\newblock Recovery of high frequency wave fields for the acoustic wave equation.
 \newblock  {\em  Multiscale Model. Simul.}  8(2) (2009), 428--444.

\bibitem{LR10}
H. Liu and J. Ralston.
\newblock Recovery of high frequency wave fields from phase space based measurements.
 \newblock  {\em  Multiscale Model. Sim.}, 8(2) (2010), 622--644.

\bibitem{LRT11}
H. Liu, O. Runborg and N. Tanushev.
\newblock Error estimates for Gaussian beam superpositions.
\newblock  {\em  Math. Comp.}, 82(282) (2013), 919--952.

\bibitem{LQ08}
S.~Leung and J.~Qian.
\newblock Eulerian {G}aussian beams for {S}chr\"{o}dinger equations in the
  semi-classical regime.
\newblock {\em Journal of Computational Physics}, 228 (2009),  2951--2977.
\bibitem{LW09}
H. Liu and Z. Wang.
 \newblock  A Bloch band based level set method for computing the semiclassical limit of Schr\"{o}dinger equations.
 \newblock   {\em  J. Comput. Phys.},  228 (2009), 3326--3344.

\bibitem{MP82}
M. M. Popov.
 \newblock  A new method of computation of wave fields using Gaussian beams.
 \newblock   {\em  Wave Motion}, 4 (1982), 85--97.

\bibitem{MR09}
M.~Motamed and O.~Runborg.
\newblock Taylor expansion and discretization errors in Gaussian beam superposition.
\newblock {\em preprint: arXiv:0908.3416v1}.

\bibitem{Ne1}
G. Nenciu.
\newblock  Existence of the exponentially localized Wannier functions.
\newblock   {\em  Commun. Math. Phys.}  91 (1983), 81--85.

\bibitem{PST03}
G. Panati, H. Spohn and S. Teufel.
\newblock Effective dynamics for Bloch electrons: Peierls substitution and beyond.
\newblock  {\em  Commun.  Math. Phys. }   242 (2003), 547-578.

\bibitem{JR82}
J. Ralston.
\newblock Gaussian beams and the propagation of singularities, in Studies in Partial Differential Equations.
\newblock  {\em  MAA Stud. Math. 23, Math. Assoc. America, Washington, DC}, 1982.

\bibitem{JR05}
J. Ralston.
\newblock Gaussian Beams.
\newblock available online from http://www.math.ucla.edu/~ralston/pub/Gaussnotes.pdf, 2005.



\bibitem{JWY08}
S.~Jin, H.~Wu, and X.~Yang.
\newblock Gaussian beam methods for the {S}chr\"{o}odinger equation in the
  semi-classical regime: {L}agrangian and {E}ulerian formulations.
\newblock {\em Comm. Math. Sci.}, 6 (2008), 995--1020.


\bibitem{JWYH10}
S. Jin, H. Wu, X.Yang and Z. Huang.
\newblock Bloch decomposition-based Gaussian beam method for the Schr\"odinger equation with periodic potentials.
\newblock   {\em  J. Comp. Phys.}  229 (2010), 4869-4883.


\bibitem{Ka80}
T. Kato.
\newblock Perturbation Theory for Linear Operators
\newblock  {\em  Springer}, 1980.

\bibitem{Sl49}
J.~C. Slater.
\newblock Electrons in perturbed periodic lattices.
\newblock  {\em  Phys. Rev. }  76 (1949), 1592--1600.  

\bibitem{Sp96}
H. Spohn.  
\newblock Long time asymptotics for quantum particles in a periodic potential. 
\newblock  {\em  Phys. Rev. Lett. }  77(7) (1996), 1198--1201.   

\bibitem{SN99}
G. Sundaram and Q. Niu. 
\newblock Wave packet dynamics in a slowly perturbed crystals: Gradient corrections and Berry phase effects. 
\newblock {\em Phys. Rev. B.}  59 (1999), 14915--14925.  


\bibitem{Tan08}
N.~M.Tanushev.
\newblock Superpositions and higher order Gaussian beams.
\newblock    {\em  Comm. in Math. Sci.},  6(2) (2008), 449--475.

\bibitem{TP04}
S. Teufel and G. Panati.
\newblock  Propagation of Wigner functions for the Schr\"{o}dinger equation with a perturbed periodic potential.
\newblock    {\em  Multiscale Methods in Quantum Mechanics},   207--220, Trends Math.,  Birkh\"{a}user Boston, Boston, MA, 2004.


\bibitem{TQR07}
N.~Tanushev, J.~Qian, and J.~Ralston.
\newblock Mountain waves and gaussian beams.
\newblock {\em SIAM Multiscale Modeling and Simulation}, 6:688--709, 2007.

\bibitem{Wi78}
C.~H. Wilcox.
\newblock Theory of Bloch waves.
\newblock   {\em  J. Anal. Math.}  33 (1978),  146--167.


\bibitem{Zi72}
J.~M. Ziman.
\newblock Principles of the Theory of Solids.
 \newblock  {\em Cambridge University Press}, second edition, 1972.


\end{thebibliography}

\end{document}